\DeclareMathOperator{\spec}{Spec}
\DeclareMathOperator{\aut}{Aut}
\DeclareMathOperator{\ord}{ord}
\DeclareMathOperator{\gl}{GL}
\DeclareMathOperator{\Ii}{im}
\DeclareMathOperator{\br}{Br}
\DeclareMathOperator{\ram}{Ram}
\DeclareMathOperator{\deck}{Deck}
\DeclareMathOperator{\mi}{\textbf{m}}
\theoremstyle{plain}
\newtheorem{thm}{Theorem}[section]
\newtheorem{theorem}[thm]{Theorem}
\newtheorem{lemma}[thm]{Lemma}
\newtheorem{corollary}[thm]{Corollary}
\newtheorem{proposition}[thm]{Proposition}
\theoremstyle{definition}
\newtheorem{remark}[thm]{Remark}
\newtheorem{definition}[thm]{Definition}
\numberwithin{equation}{thm}
\newcommand{\sC}{{\mathcal C}}
\newcommand{\sF}{{\mathcal F}}
\newcommand{\sL}{{\mathcal L}}
\newcommand{\sO}{{\mathcal O}}
\newcommand{\sT}{{\mathcal T}}
\newcommand{\sU}{{\mathcal U}}
\newcommand{\sV}{{\mathcal V}}
\newcommand{\sY}{{\mathcal Y}}
\newcommand{\C}{{\mathbb C}}
\renewcommand{\L}{{\mathbb L}}
\renewcommand{\P}{{\mathbb P}}
\newcommand{\Q}{{\mathbb Q}}
\newcommand{\Z}{{\mathbb Z}}
\newcommand{\End}{{\rm End}}
\newcommand{\Hom}{{\rm Hom}}
\newcommand{\SL}{{\rm SL}}
\begin{document}
\title[Galois covers of curves and their Families]{On Galois coverings of curves and their Families}
\author{Abolfazl Mohajer}
\address{School of Mathematical and Statistical Sciences, University of Galway, Galway, Ireland.}
\email{abmohajer83@gmail.com}
\subjclass{14A10, 14A15, 14E20, 14E22}
\keywords{Algebraic variety, Galois covering, ramified covering}
\maketitle
\begin{abstract}
In this paper, we describe Galois covers of algebraic curves and their families by using local systems associated to push-forward of sheaves by the structure morphism. More precisely, if $f:C\to Y$, we consider the sheaves $f_*(\C)$. The group action by the Galois group $G$, yields a decomposition of this sheaf into irreducible local systems corresponding to  irreducible representations of the group $G$. If $\rho$ is such an  irreducible representation, the eigensheaf $\L_{\rho}$ of $f_*(\C)$ gives rise to another useful sheaf which is related to the homology group $H_1(C,\C)$. Using this, we describe the action of the Galois group $G$ on the homology group. As a particular example, we study the Dihedral covers of $\P^1$ in some detail. 
\end{abstract}

\section{Introduction}
In this paper, we investigate the structure of Galois covers $f\colon C\to Y$ of smooth complex algebraic curves. The structure of dihedral covers of algebraic varieties has been studied by Catanese and Perroni in \cite{CP} in some detail. The irreducibility of the space of dihedral covers of $\P^1$ of a given numerical type had already been explored by Catanese, Lönne and Perroni in \cite{CLP}. The same authors describe in \cite{CLP2} the irreducible components of the moduli space of dihedral covers of algebraic curves. These papers build the first milestones toward the full description of general non-abelian covers of algebraic varieties. In particular, in \cite{CP}, the algebraic “building data” on an algebraic variety $Y$ which are equivalent to the existence of such covers have been determined. Inspired by these results, the author described in \cite{M} the structure of metabelian Galois coverings of algebraic varieties, generalizing the results of \cite{CP} and in particular their building data. For some applications of such results in another direction, see for example the paper \cite{P21} by Perroni. In the present paper, we take another route and prove some results by constructing and studying local systems on algebraic curves using the structure of Galois covers. We will also look at dihedral coverings of the projective line $\P^1$ using the previously proven results on such coverings. In addition to aforementioned papers, we will use several other results for instance from \cite{Mir}, \cite{R} or \cite{CW} in our investigations. Our results also deal with the monodromy of Galois covers and the line bundles associated to them. As stated in \cite{CP}, a main issue in the classification theory of algebraic varieties is the construction of interesting and illuminating examples. For instance in the book of Enriques \cite{En}, a recurrent method is considering the minimal resolution $S$ of double covers of $\P^2$. Also, as stated in \cite{CP}, we can describe an algebraic function on an algebraic curve $Y$ as a rational function on a projective curve $X$ admitting a non-constant morphism $X\to Y$ such that the field extension $\C(Y)\subset\C(X)$ is generated by our algebraic function. The algebraic function is in general polydromic, i.e., many valued as a function on $Y$, and going around a closed loop we do not return to the same value. It is a theorem of Weierstrass that our algebraic function is a rational function on $Y$ iff it is monodromic, i.e., there is no polydromy (as is mentioned  in \cite{CLP}, here what should be called polydromy is nowadays called monodromy based on the explanation Lucus a non lucendo!). Note that for the type of questions that we are investigating, the Riemann’s existence theorem, Theorem ~\ref{RET} plays a crucial role.

\section{Galois coverings of curves}
Let $C,Y$ be complex smooth projective algebraic curves (equivalently Riemann surfaces) and let $f\colon C\to Y$ be a Galois covering of degree $n$. By this we mean precisely that there exists a finite group $G$ with $|G|=n$ together with a faithful action of $G$ on $C$ such that $f$ realizes $Y$ as the quotient of $C$ by $G$. We denote by $\br(f)\subset Y$ the set of branch points of $f$ and with $\ram(f)\subset C$ the set of ramification points. Note that $\ram(f)$ consists precisely of the points in $C$ with non-trivial stabilizers under the action of $G$ and by definition, $\br(f)=f(\ram(f))$. Since the cover is assumed to be Galois, we also have that $f^{-1}(\br(f))=\ram(f)$. Indeed if $x\in C$ is a ramification point with ramification index $e$, then so are all points in the fiber $f^{-1}(f(x))$. So it makes sense to attribute the ramification index also to the branch points of $f$. In fact we will do this throughout the whole manuscript.  The deck transformation group $\deck(C/ Y)$, i.e., the group of those automorphisms of $C$ that are compatible with $f$ is isomorphic to the Galois group $G$ and acts transitively on each fiber $f^{-1}(y)$. Moreover, the stabilizers of the points of $C$ are conjugate cyclic subgroups of $\deck(C/ Y)$ (or by using the isomorphism $\deck(C/Y)\cong G$, of $G$) of  order $e$, see \cite{Sz}, Proposition 3.2.10. If $x\in f^{-1}(y)$, then any other point $x^{\prime}\in f^{-1}(y)$ is of the form $x^{\prime}=\sigma\cdotp x$ for some $\sigma\in \deck(C/ Y)\cong G$. If $H_x=\langle h_x\rangle$ denotes the stabilizer of $x$, then $H_{x^{\prime}}=\langle h_{x^{\prime}}\rangle=\sigma H_{x}\sigma^{-1}$. Now assume that $y$ is a branch point of $f$ of ramification index $e$ and $x\in f^{-1}(y)$. By the above, the stabilizer $H_x=\langle h_x\rangle$ is then a non-trivial cyclic group of order $e$. Fix a primitive $e$-th root of unity $\xi_e$. Consider the primitive character $\eta_x:H_x\to \C^*$ and suppose that $h_x$ is the unique element such that $\eta_x(h_x)=\xi_e$. We call the element $h_x$ \emph{the local monodromy} at the ramification point $x$. If $x^{\prime}\in f^{-1}(y)$  is any other point, then the local monodromy at $x^{\prime}$ is $h_{x^{\prime}}=\sigma\cdotp h_{x}\cdotp\sigma^{-1}$.\par In particular the stabilizer of a point in $C$ is trivial, if and only if that point is \emph{not} a ramification point. The stabilizer $H_x$ of a point $x\in C$ is called the \emph{inertia subgroup} of $x$. Let $\pi_1(Y\setminus\br(f))$ be the fundamental group of the punctures Riemann surface. By the theory of Riemann surfaces, one obtains a so-called \emph{monodromy (permutation) representation} $\Phi:\pi_1(Y\setminus\br(f))\to S_n$. In the sequel we usually denote $\Delta=\br(f)$ and $|\Delta|=r$. The celeberated result of Riemann, the \emph{Riemann existence theorem}, asserts that the covering $f$ is fully determined by the homomorphism $\Phi$. If $y$ is a branch point of $f$ with ramification index $e_y$, then the cycle structure of the permutation $\gamma$ representing a small loop around the branch point $y$ is $(\underbrace{e_y,\dots,e_y}_{\frac{n}{e_y}-\text{times}})$. Indeed, at every ramification point over $y$, there is a cycle of length $e_y$ and the number of ramification points is $\frac{n}{e_y}=|f^{-1}(y)|$. Recall that 
\begin{equation} \label{braid1}
\pi_1(Y\setminus\Delta)\coloneqq \Gamma_{g^{\prime},r}=\langle \alpha_{1}, \beta_1, \ldots, \alpha_{g^{\prime}}, \beta_{g^{\prime}}, \gamma_1,\ldots, \gamma_r\mid \prod_1^r\gamma_i\prod_1^{g^{\prime}}[\alpha_{j}, \beta_j]=1 \rangle,
\end{equation}
where $g^{\prime}$ is the genus of $Y$ and $r=|\Delta|$ as above. If $C\to Y$ is a Galois cover of curves with branch locus $\Delta$, let us set $U:=Y\setminus\Delta$ and $V:=f^{-1}(U)$. Then $f|_V:V\to U$ is an unramified Galois covering. If $G$ is the Galois group of $f$, then there is an epimorphism $\pi_1(U)\to G$ or in other words an epimorphism $\Gamma_{g^{\prime},r}\to G$. We therefore make following defintion.
\begin{definition}\label{datum}
A datum is a triple $(\mi, G, \Phi)$, where $\mi=(m_1,\ldots, m_r)$ is an $r$-tuple of integers $m_i\geq 2$, $G$ is a finite group and $\Phi:\Gamma_{g^{\prime},r}\to G$ is an epimorphism such that $\Phi(\gamma_i)$ has order $m_i$ for each $i$.
\end{definition}
The Riemann’s existence theorem asserts that a cover $C\to Y$ can be completely determined by a datum.

\begin{theorem}\label{RET}
A datum compeltely determines a branched cover of curves $C\to Y$. In other words, a finite set $\Delta\subset Y$ and datum as in Definition ~\ref{datum} determines a cover $C\to Y$ up to isomorphism. 
\end{theorem}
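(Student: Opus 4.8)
The plan is the classical two-step argument. \emph{Step 1 (the unramified $G$-cover).} Put $U=Y\setminus\Delta$, so that $\pi_1(U)\cong\Gamma_{g^{\prime},r}$, and let $N=\ker\Phi\trianglelefteq\pi_1(U)$, a normal subgroup of index $n=|G|$. Since $U$ is a punctured Riemann surface it is path-connected, locally path-connected and semi-locally simply connected, so the Galois correspondence for covering spaces yields a connected covering $p\colon V\to U$ with $p_*\pi_1(V)=N$, unique up to isomorphism over $U$; normality of $N$ makes $p$ a Galois covering with deck group $\pi_1(U)/N\cong G$, and surjectivity of $\Phi$ is exactly what forces $V$ to be connected. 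Pulling back the complex structure of $U$ along the local homeomorphism $p$ makes $V$ a Riemann surface with $p$ holomorphic and the $G$-action holomorphic.

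\emph{Step 2 (compactification).} For each $\delta_i\in\Delta$ choose a coordinate disk $D_i\subset Y$ with $D_i\cap\Delta=\{\delta_i\}$; then $D_i^{\ast}=D_i\setminus\{\delta_i\}$ has $\pi_1(D_i^{\ast})\cong\Z$ generated by a loop in the class of $\gamma_i$. Because $\Phi(\gamma_i)$ has order $m_i$, the covering $p^{-1}(D_i^{\ast})\to D_i^{\ast}$ is a disjoint union of $n/m_i$ annuli, each a connected degree-$m_i$ cover of the punctured disk and hence analytically isomorphic to $w\mapsto w^{m_i}$ — precisely the cycle structure $(m_i,\dots,m_i)$ recorded before the statement. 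Adjoining one point to each such annulus turns it into a disk on which the map reads $w\mapsto w^{m_i}$; carrying this out over all punctures produces a compact surface $C\supseteq V$ together with a proper extension $f\colon C\to Y$ of $p$. The $G$-action extends continuously across the finitely many added points, and by the Riemann removable-singularity theorem the extensions of both $f$ and of the $G$-action are holomorphic, so $f\colon C\to Y$ is a Galois branched cover realizing the datum $(\mi,G,\Phi)$ with branch locus $\Delta$.

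\emph{Step 3 (uniqueness).} If $f_j\colon C_j\to Y$ $(j=1,2)$ both realize the same $\Delta$ and the same datum, their restrictions over $U$ are connected unramified $G$-covers with the same monodromy $\Phi$ (after identifying their deck groups with $G$), hence $G$-equivariantly isomorphic over $U$ by the uniqueness clause of the Galois correspondence. Such an isomorphism is a biholomorphism $V_1\to V_2$ over $U$, and it extends across the finitely many points of $f_j^{-1}(\Delta)$ — again by removable singularities, or by matching the local normal forms $w\mapsto w^{m_i}$ — to a $G$-equivariant isomorphism $C_1\cong C_2$ over $Y$.

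The step I expect to be the main obstacle is the analytic half of Step 2: checking that the purely topological filling-in is compatible with the complex structures, i.e.\ that the extended $f$ and the extended $G$-action are holomorphic at the newly added points. This is exactly where the removable-singularity theorem and the explicit local model $w\mapsto w^{m_i}$ do the real work; the rest is bookkeeping with the covering-space Galois correspondence, together with the observation (already noted above) that the prescribed orders $m_i=\ord(\Phi(\gamma_i))$ pin down the local ramification behaviour.
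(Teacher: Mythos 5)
Your argument is correct, and it is the standard proof of the classical Riemann existence theorem: pass to the unramified $G$-cover of $U=Y\setminus\Delta$ determined by $\ker\Phi$ via the Galois correspondence for covering spaces, then fill in the punctures using the local model $w\mapsto w^{m_i}$ (justified by the cycle structure of the local monodromy and the removable-singularity theorem), with uniqueness obtained by extending the $G$-equivariant isomorphism over $U$ across the finitely many added points. Note, however, that the paper itself gives no proof of Theorem~\ref{RET}: it treats it as the classical result, and for the related statement Theorem~\ref{Riemann existence} it simply refers to \cite{Vo}. So your write-up supplies the argument the paper delegates to the literature; the only points worth making explicit beyond what you wrote are the minor bookkeeping facts that the completed surface $C$ is compact (properness of the extended map over the compact $Y$) and that a compact Riemann surface, together with a holomorphic map to $Y$, is automatically a smooth projective curve with an algebraic morphism, so the topological/analytic construction really produces a cover in the paper's algebraic category.
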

In the case $Y=\P^1$, we consider a finite subset $\Delta=\{t_1,\dots, t_r\}\subset \mathbb{P}^1$. It is then well-known that the fundamental group $\pi_1(\P^1\setminus\Delta)$ has the following presentation
\begin{equation} \label{braid}
\pi_1(\P^1\setminus\Delta)\coloneqq \Gamma_r=\langle g_1,\dots, g_r\mid g_1\cdots g_r=1 \rangle
\end{equation}
Recall from the above that if $y\in Y$ is a branch point and $x,x^{\prime}\in f^{-1}(y)$, then the inertia groups $H_x$ and $H_{x^{\prime}}$ are conjugate cyclic subgroups of $G$ and the local monodromies are also conjugate by the same element of $G$. Therefore every finite Galois covering gives rise to a family of conjugacy classes of the finite group $G$. Combining this with \ref{braid} yields the following special case of the Riemann existence theorem.
\begin{theorem}(Riemann existence theorem for coverings of $\P^1$) \label{Riemann existence} Let $\Delta=\{t_1,\dots, t_r\}\subset \mathbb{P}^1$ be a finite set and $G$ a finite group. Furthermore, for each $t_i$, let $C_{t_i}$ be a (non-trivial) conjugacy class of $G$. Then there exists a finite Galois covering of $\mathbb{P}^1\setminus\Delta$ with ramification given by the conjugacy classes $C_{t_i}$ if and only if there exist generators $g_1,\dots, g_r$ of $G$ such that $g_1\cdots g_r=1$ with $g_i\in C_{t_i}$.
\end{theorem}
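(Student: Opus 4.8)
The plan is to deduce this as a special case of the general Riemann existence theorem (Theorem~\ref{RET}), using the explicit presentation \eqref{braid} of $\pi_1(\P^1\setminus\Delta)$ together with the standard dictionary between connected Galois covers and surjective representations of the fundamental group. Write $U=\P^1\setminus\Delta$ and $\Gamma_r=\pi_1(U)=\langle g_1,\dots,g_r\mid g_1\cdots g_r=1\rangle$ as in \eqref{braid}. A connected finite Galois cover $V\to U$ with group $G$ is the same datum (up to isomorphism) as a surjective homomorphism $\Phi\colon\Gamma_r\to G$, the total space being connected precisely because the number of its components equals the index of the image of $\Phi$. Under the presentation, giving such a $\Phi$ amounts to choosing elements $\sigma_i:=\Phi(g_i)\in G$ that generate $G$ and satisfy $\sigma_1\cdots\sigma_r=1$.

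It then remains to match the topological monodromy around each $t_i$ with the prescribed ramification. I would first recall, exactly as in Section~2, that the inertia subgroup of a point $x$ over a branch point $t_i$ is the cyclic group generated by the local monodromy $h_x$, that as $x$ runs over $f^{-1}(t_i)$ these elements $h_x$ fill out a single conjugacy class of $G$, and that a suitably oriented small loop $g_i$ about $t_i$ acts on the fibre through a representative of that class. Hence, under the dictionary above, $\sigma_i=\Phi(g_i)$ lies in exactly the conjugacy class attached to $t_i$ (and $t_i$ is a genuine branch point iff $\sigma_i\neq 1$); in other words, the assertion ``the cover has ramification given by the classes $C_{t_i}$'' translates precisely into ``$\sigma_i\in C_{t_i}$ for every $i$''.

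Granting this dictionary, both implications are short. For the forward direction, a cover as in the statement has a monodromy homomorphism $\Phi\colon\Gamma_r\to G$ that is surjective by connectedness of $C$; setting $g_i:=\Phi(g_i)$ produces generators of $G$ with $g_1\cdots g_r=1$ (the relation in $\Gamma_r$) and $g_i\in C_{t_i}$ by the previous paragraph. Conversely, given generators $g_1,\dots,g_r$ of $G$ with $g_1\cdots g_r=1$ and $g_i\in C_{t_i}$, the assignment sending the generator $g_i$ of $\Gamma_r$ to $g_i\in G$ is compatible with the single defining relation and so determines a surjection $\Phi\colon\Gamma_r\to G$; by Theorem~\ref{RET} (equivalently: form the connected unramified $G$-cover $V\to U$ with monodromy $\Phi$ and fill in the punctures, i.e.\ pass to the normalization of $\P^1$ in the function field $\C(V)$) one obtains a connected finite Galois cover $f\colon C\to\P^1$ with monodromy $\Phi$, and reading the dictionary backwards shows that its branch points are the $t_i$ with $g_i\neq 1$ with ramification classes $C_{t_i}$.

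The step I expect to require the most care is the middle one: pinning down that the cyclic inertia group at a point over $t_i$ is generated by a conjugate of $\Phi(g_i)$ — with the correct orientation convention relating the loop $g_i$ to $\Phi(g_i)^{\pm1}$ — so that the local analytic ramification really is the one recorded by $C_{t_i}$. Everything else is a formal consequence of Theorem~\ref{RET} and the presentation \eqref{braid}.
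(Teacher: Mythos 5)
Your argument is correct, and it is essentially the proof the paper has in mind but does not write out: the paper's ``proof'' is only a citation to V\"olklein \cite{Vo}, while its lead-in text (``Combining this with \ref{braid} yields the following special case\dots'') gestures at exactly the dictionary you spell out. You deduce the statement from Theorem~\ref{RET} together with the presentation \eqref{braid}: surjections $\Gamma_r\to G$ correspond to connected $G$-Galois covers of $U=\P^1\setminus\Delta$ (connectedness $\Leftrightarrow$ surjectivity, via the component count by cosets of the image), the single relation $g_1\cdots g_r=1$ translates the existence of a surjection into the existence of generators with product one, and the local monodromy at $t_i$ --- the conjugacy class of $\Phi(g_i)$, which generates the inertia groups over $t_i$ as recalled in Section~2 --- is what the theorem means by ``ramification given by $C_{t_i}$''; filling in the punctures by normalization recovers the branched cover. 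Your flagged point of care (orientation convention and the identification of inertia generators with conjugates of $\Phi(g_i)$) is indeed the only place where real verification is needed, and it is standard. One harmless gloss: connected $G$-covers correspond to surjections $\Gamma_r\to G$ only up to composing with automorphisms of $G$, but since the theorem is an existence statement in both directions this ambiguity is irrelevant. So your proposal correctly supplies the argument that the paper outsources to the literature.
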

\begin{proof}
For a proof of this theorem, see \cite{Vo}. 
\end{proof}
Let $t_k\in Y$ be a branch point of $f:C\to Y$. Without loss of generality we may assume $t_k=0$ in a local chart around the point (which is isomorphic to an open subset of $\C$). Choose a small disk centered in $t_k=0$. Let $e_k$ be the ramification index of $f$ above $0$.  Near the branch point $0$, the covering $f$ looks like the root map $z\mapsto z^{1/e_k}$. This means that we can lift a closed path around $0$ to a path starting from $(z,z^{1/e_k})$ and ending at the point $(z,\exp(\frac{2\pi i}{e_k})z^{1/e_k})$.\par

\subsection{Sheaves and Galois covers}

\subsubsection{{\bf $G$-Sheaves}}\label{g-sheaf}
 Most of the notation and presentation of this subsection comes from \cite{CLP}, \S 5.2. Let $Y$ is a complex smooth algebraic variety on which a finite group $G$ acts. The sheaf of $\sO_Y$-algebras $\sO_Y[G]$ is defined in a similar way as the regular representation $\C[G]$ of $G$.  A sheaf $\sF$ of $\sO_Y$-modules is called a \emph{$G$-sheaf}, if it has a structure of sheaf of $\sO_Y[G]$-modules. If moreover $\sF$  is a vector bundle then its fibers carry a linear $G$-action and so we can see $\sF$ as a family of representations of $G$ parametrized by $Y$. Recall that for any representation $\rho:G\to\gl(V)$ of $G$, its \emph{canonical decomposition } is the unique decomposition
\begin{equation}\label{can decomp}
V=V_1\oplus\ldots\oplus V_N,
\end{equation}
defined as follows. Let $W_1,\ldots, W_N$ be the different irreducible representations of $G$. Then each $V_i$ is the direct sum of all the irreducible
representations of $G$ in $V$ that are isomorphic to $W_i$. If $\chi_1,\ldots, \chi_N$ are the characters of $W_1,\ldots, W_N$, and $n_i=\dim W_i$, then 
\begin{equation}\label{projector}
p_i=\frac{n}{|G|}\displaystyle\sum_{g\in G}\overline{\chi_i(g)}\rho(g)\in\End(V),
\end{equation}
is the projection of $V$ onto $V_i$ for any $i=1,\ldots, N$, where $\overline{\chi_i(g)}$ is the complex-conjugate of $\chi_i(g)$.\par Let now $\sF$ be a locally free $G$-sheaf on $Y$ with action $\rho:G\to\gl_{\sO_Y}(\sF)$. Via ~\ref{projector}, we define an endomorphism $p_i\in\End_{\sO_Y}(\sF)$, for any $i=1,\ldots, N$. Setting $\deg\sF_i=\Ii(p_i)$, we have the following decomposition:
\begin{equation}\label{coh decomp}
\sF=\sF_1\oplus\ldots\oplus \sF_N,
\end{equation}
for any $i=1,\ldots, N$, $\sF_i$ is called the \emph{eigensheaf} of $\sF$ corresponding to the irreducible representation $\rho_i$ with character $\chi_i$. Notice that $\sF_i$ is a vector sub-bundle of $\sF$ for any $i$. In particular, when $\sF=\pi_*\sO_X, \pi:X\to Y$ is a flat $G$-cover, we have that $\pi_*\sO_X$ is a locally free sheaf of $\sO_Y[G]$-modules of rank one, i.e. the fibres of $\pi_*\sO_X$ are isomorphic to $\C[G]$ as $G$-representations. Indeed, the previous procedure gives the decomposition $\pi_*\sO_X=(\pi_*\sO_X)_1\oplus\ldots\oplus (\pi_*\sO_X)_N$ with $(\pi_*\sO_X)_i\subset \pi_*\sO_X$ a sub-bundle, for any $i$. By construction, the fibres of $(\pi_*\sO_X)_i$ are isomorphic to each other as $G$-representations. So, it is enough to consider the restriction of $\pi_*\sO_X$ on the complement of $Y\setminus\Delta$. \par Of course in this paper we are only interested in the case where $Y$ is a complex smooth algebraic curve. Note that if $f:C\to Y$ is a $G$-cover of curves,  then $f_*\omega_C$ carries a structure of $G$-sheaf, see \S ~\ref{g-sheaf}. As such, it has a decomposition into $G$-eignsapces ${(f_*\omega_C)}_{\rho_j}$. Also, if $\pi:\sC\to\sY$ is a family of $G$-covers of a curve $Y$, then the sheaf $R^1\pi_*(\C)$ has the structure of a $G$-sheaf with eigensheaves $\sL_{\rho_j}$. If $H^1(\sC_q,\C)_{\rho_j}=(\sL_{\rho_j})_q$ satisfies that $h^{1,0}(\sC_q)_{\rho_j}=n_1$ and $h^{0,1}(\sC_q)_{\rho_j}=n_2$, then we say that the eigenspace $\sL_{\rho_j}$ is of type $(n_1, n_2)$. 

\subsubsection{{\bf The local system corresponding to a Galois cover}}
In this subsection, we study the sheaves under $G$-Galois covers in more details. Let us begin with a well-known result. 
\begin{lemma} \label{local system}
Let $U$ be an arcwise connected and locally simply connected topological space and $x\in U$. Then there is a bijection between complex local systems of rank $n$ over $U$ and (monodromy) representations $\theta:\pi_1(U)\to \gl_n(\C)$. Under this bijection, irreducible local systems correspond to irreducible representations. 
\end{lemma}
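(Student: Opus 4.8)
The plan is to set up the two functors relating rank-$n$ local systems on $U$ and $n$-dimensional representations of $\pi_1(U,x)$, show they are mutually quasi-inverse, and then deduce the statement on irreducibility formally. First I would attach a representation to a local system $\sL$. Fix an identification $\sL_x\cong\C^n$ of the stalk at the base point. Since $\sL$ is locally constant and $U$ is arcwise connected and locally simply connected, any path $c\colon[0,1]\to U$ carries a well-defined ``analytic continuation'' of germs: cover $c([0,1])$ by finitely many open sets on which $\sL$ is constant (using compactness of $[0,1]$) and transport a germ at $c(0)$ uniquely along the chain to a germ at $c(1)$, obtaining an isomorphism $\sL_{c(0)}\xrightarrow{\sim}\sL_{c(1)}$. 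A standard monodromy argument — subdividing a homotopy of paths into small squares on which $\sL$ is constant — shows this isomorphism depends only on the homotopy class of $c$ rel endpoints. Applied to loops based at $x$, this yields a group homomorphism $\theta_{\sL}\colon\pi_1(U,x)\to\gl(\sL_x)\cong\gl_n(\C)$, and it is functorial in $\sL$ because a morphism of local systems commutes with analytic continuation.

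Conversely, given $\theta\colon\pi_1(U,x)\to\gl_n(\C)$, let $p\colon\widetilde U\to U$ be the universal cover (which exists under our hypotheses on $U$), with $\pi_1(U,x)$ acting on $\widetilde U$ by deck transformations. I would set $\sL_\theta$ to be the sheaf of sections of the flat bundle $\widetilde U\times_{\pi_1(U,x)}\C^n$, that is, the quotient of $\widetilde U\times\C^n$ by the diagonal action $\gamma\cdot(\tilde u,v)=(\gamma\tilde u,\theta(\gamma)v)$; equivalently $\sL_\theta=\bigl(p_*\underline{\C^n}\bigr)^{\pi_1(U,x)}$ for the action of $\pi_1$ on $\widetilde U$ together with its action on $\C^n$ via $\theta$. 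Because $p$ is a covering map, $\sL_\theta$ is locally constant of rank $n$, and the assignment is again functorial in $\theta$.

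It remains to verify that these constructions are quasi-inverse. On one side, the monodromy of $\sL_\theta$ along (the image of) a loop recovers $\theta$, by unwinding the definition of the deck action. On the other side, for a local system $\sL$ the bundle reconstructed from $\theta_{\sL}$ is canonically isomorphic to $\sL$: pulling $\sL$ back to the simply connected space $\widetilde U$ makes it globally constant, and the residual $\pi_1(U,x)$-action on the global sections is exactly $\theta_{\sL}$, so descent gives back $\sL$. Under the resulting equivalence of categories, a sub-local-system of $\sL$ corresponds (by taking stalks at $x$, and conversely the associated subbundle) precisely to a $\theta_{\sL}$-invariant linear subspace of $\sL_x$; hence $\sL$ admits no nontrivial proper sub-local-system if and only if $\theta_{\sL}$ is irreducible, which is the final assertion. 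The only genuinely delicate point is the homotopy-invariance of analytic continuation, i.e.\ that $\theta_{\sL}$ is well defined on $\pi_1(U,x)$ and not merely on the set of based loops — everything else is formal bookkeeping with the universal cover.
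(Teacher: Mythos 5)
The paper does not prove this lemma at all: it is stated as a ``well-known result'' with no argument given (the later text only points to Szamuely's book for the construction of the representation attached to a local system). So there is no proof in the paper to compare against; what matters is whether your argument stands on its own, and it does. Your proof is the standard one: analytic continuation of germs along paths (finite covers of $c([0,1])$ by opens on which $\sL$ is constant, homotopy invariance by subdividing a homotopy into small squares) gives the monodromy functor $\sL\mapsto\theta_{\sL}$; the universal cover, which exists because $U$ is arcwise connected and locally simply connected, gives the inverse construction $\theta\mapsto\bigl(p_*\underline{\C^n}\bigr)^{\pi_1(U,x)}$; the two are quasi-inverse; and irreducibility matches up because, under the resulting equivalence, sub-local-systems of $\sL$ correspond exactly to $\theta_{\sL}$-invariant subspaces of the stalk $\sL_x$. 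Two small points worth making explicit if you write this up: the ``bijection'' of the lemma is really between isomorphism classes of rank-$n$ local systems and representations up to conjugation in $\gl_n(\C)$ (your choice of identification $\sL_x\cong\C^n$ is exactly where the conjugacy ambiguity enters), and one must fix a composition convention for loops so that $\theta_{\sL}$ is a homomorphism rather than an anti-homomorphism (equivalently, act by $\theta(\gamma)^{-1}$ or use the opposite group where needed). Neither affects the substance; your proof is complete and is precisely the argument the paper implicitly relies on.
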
 
We apply Lemma \ref{local system} in the case $U=Y\setminus \Delta$ as above, where $\Delta\subset Y$ is a finite subset. Let us describe the local system $f_*(\C)\mid_{U}$. Let $D\subset U$ be an arbitrary disc. The preimage $f^{-1}(D)$ is the disjoint union of $n=|G|$ discs. We label these discs by the elements of $G$ and write  $f^{-1}(D)=\bigsqcup\limits_{g\in G}D_g$ where each $D_g$ is a disc and the Galois group $G$ acts as $h\cdotp D_g=D_{h\cdotp g}$. This action corresponds to the regular representation $\rho:G\to \gl(\C[G])\cong \gl_n(\C)$. Hence the vector space $f_*(\C)\mid_{U}(D)$ has the structure of a $\C[G]$-module (equivalently a $G$-representation). Theorem \ref{RET} amounts to give an epimorphism $\Phi:\pi_1(U)\to G$ such that $\Delta=\br(f)$. As the construction of this map is useful in relating the $\C[G]$-module to the representation $\theta$, we explain this in more details here: Let $V=f^{-1}(U)$ as before, so that $f:V\to U$ is a topological (unramified) Galois covering of degree $n$. Choose $x\in V$ and let $f(x)=y\in U$. Consider the fundamental group $\pi_1(U, y)$ and let $[\gamma]\in\pi_1(U, y)$ be the class of a loop $\gamma$ in $U$. By the lifting property of Hausdorff spaces (see e.g., \cite{F}), $\gamma$ lifts to a (unique) path $f^*(\gamma)$ in $V$ with starting point $f^*(\gamma)(0)=x$ and ending point $f^*(\gamma)(1)=g\cdotp x$ for some $g\in \deck(C/Y)\cong G$. The epimorphism $\Phi$ associates $g$ with the class $[\gamma]\in\pi_1(U, y)$. On the other hand, the construction of the representation $\theta$ (see e.g., \cite{Sz}) corresponding to the local system $f_*(\C)$, shows that $\theta$ factors through the action of $G$, i.e., the regular representation of $G$ described above. So we have the following commutative diagram:
\begin{equation} \label{commutative repres}
\begin{tikzcd}
& G \arrow{dr}{\rho} \\
\pi_1(U) \arrow{ur}{\Phi} \arrow{rr}{\theta} && \gl_n(\C)
\end{tikzcd}
\end{equation}
The $\C[G]$-module $f_*(\C)\mid_{U}(D)$ described above, decomposes into a direct sum of irreducible $\C[G]$-modules. Let $\rho_1,\dots, \rho_s$ be the irreducible representations of $G$, with $\deg \rho_i=n_i.$ Then $f_*(\C)\mid_{U}(D)=\bigoplus\limits_{i=1}^{s}L_i$, where $L_i$ is an irreducible $G$-representation of degree $n_i$. Since $D$ is arbitrary, this argument globalizes and we obtain 
\begin{equation} \label{local system decomposition}
f_*(\C)\mid_{U}=\bigoplus\limits_{i=1}^{s}\L_{\rho_i},
\end{equation}
where $\L_{\rho_i}$ is the eigensheaf corresponding to the irreducible representation $\rho_i.$ It is a local system of rank $n_i$. Let $\theta:\pi_1(U)\to \gl_n(\C)$ be the representation corresponding to the local system $f_*(\C)\mid_{U}$ by Lemma \ref{local system}. The representation $\rho$ decomposes into irreducible representations
\begin{equation}
\theta=\theta_{\rho_1}\oplus\dots\oplus \theta_{\rho_s}:\pi_1(U)\to \prod\limits_{i=1}^s\gl_{n_i}(\C) \text{ where }\\
\theta_{\rho_i}:\pi_1(U)\to \gl_{n_i}(\C)
\end{equation}
is the monodromy representation of the eigenspace $\L_{\rho_i}$ as in \ref{local system decomposition}. By \ref{commutative repres}, each $\theta_{\rho_i}$ is also induced by a $\rho_i:G\to \gl_{n_i}(\C)$. Consider the branch point $t_k$ and denote the inertia group of the ramification point $x_{j_k}\in f^{-1}(t_k)$ with $H_{j_k}=\langle h_{j_k}\rangle$. Let $\gamma_k$ be a small loop around $t_k$ such that $\{\gamma_k\}\subset U$. So the possible lifts of this loop are paths with starting point $\gamma_k^g(0)=y_g\in D_g$ and ending point $\gamma_k^g(1)=y_{h_{j_k}\cdotp g}\in D_{(h_{j_k}\cdotp g)}$ (note that the discs are labeled by elements $g\in G$). Therefore we have
\begin{theorem} \label{local system repres}
The local system $f_*(\C)\mid_{U}$ associated to the Galois cover $f:C\to Y$ is given by the monodromy representation $\theta:\pi_1(U)\to \gl_n(\C)$ as follows
\begin{equation}
\theta(\gamma_k)(\begin{bmatrix}
x_1 \\
x_{2} \\
\vdots \\
x_s \\
\end{bmatrix})=\begin{bmatrix}
\rho_1(h_k)(x_1) & & &0\\
& \rho_2(h_k)(x_2)& &\\
& & \ddots &\\
0& & & \rho_s(h_k)(x_s)
\end{bmatrix},
\end{equation}
where $\gamma_k$ is a closed path around the branch point $t_k$ and $x_j$ is a local section of $\L_{\rho_j}$. 
\end{theorem}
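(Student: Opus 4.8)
The plan is to assemble the formula from the identifications set up in the paragraphs preceding the statement, the only step requiring care being the computation of the monodromy of a small loop around a branch point. First I would recall diagram \ref{commutative repres}: under the correspondence of Lemma \ref{local system}, the local system $f_*(\C)\mid_U$ corresponds to a monodromy representation $\theta\colon\pi_1(U)\to\gl_n(\C)$ which factors as $\theta=\rho\circ\Phi$, where $\Phi\colon\pi_1(U)\to G$ is the epimorphism furnished by Riemann existence (Theorem \ref{RET}) and $\rho\colon G\to\gl(\C[G])\cong\gl_n(\C)$ is the left regular representation realized on the sheets $f^{-1}(D)=\bigsqcup_{g\in G}D_g$ by $h\cdot D_g=D_{h\cdot g}$. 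Thus $\theta(\gamma)=\rho(\Phi(\gamma))$ for every $[\gamma]\in\pi_1(U)$, and it is enough to compute $\Phi(\gamma_k)$ for a small loop $\gamma_k$ around each branch point $t_k$.

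Second I would identify $\Phi(\gamma_k)$ with the local monodromy. Fix a ramification point $x_{j_k}\in f^{-1}(t_k)$ with inertia group $H_{j_k}=\langle h_{j_k}\rangle$ of order $e_k$. As recalled immediately before the statement, in a local chart with $t_k=0$ the cover is the root map $z\mapsto z^{1/e_k}$, and the lift of $\gamma_k$ beginning in the sheet $D_g$ ends in the sheet $D_{h_{j_k}\cdot g}$. Choosing the lift starting in $D_{\id}$, the sheet labelled by the identity, and invoking the definition of $\Phi$ --- the deck transformation carrying the start of a lift to its end --- one obtains $\Phi(\gamma_k)=h_{j_k}$, which we abbreviate $h_k\coloneqq h_{j_k}$; here $\gamma_k$ is oriented so that its monodromy multiplies the local coordinate by $\exp(2\pi i/e_k)$, matching the normalization $\eta_{x_{j_k}}(h_{j_k})=\xi_{e_k}$ used earlier to single out $h_{j_k}$ among the generators of $H_{j_k}$. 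A different choice of base sheet replaces $h_k$ by a conjugate, in accordance with the conjugacy-class statements made before.

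Third I would feed in the canonical decomposition of $\rho$. Writing $\rho_1,\dots,\rho_s$ for the irreducible representations of $G$, $\deg\rho_i=n_i$, the regular representation $\rho$ is the sum of its isotypic components, which on the level of local systems is precisely the eigensheaf decomposition \ref{local system decomposition}, $f_*(\C)\mid_U=\bigoplus_{i=1}^s\L_{\rho_i}$, with $\L_{\rho_i}$ built from the projector $p_i$ of \ref{projector}. Since these projectors commute with $\rho$ (being formed from class functions), each $\L_{\rho_i}$ is $G$-stable, and for every $g\in G$ the operator $\rho(g)$ is block-diagonal with $i$-th block the action $\rho_i(g)$ on $\L_{\rho_i}$. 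Applying this with $g=h_k$ and combining with the previous steps, for a local section with component $x_j$ in $\L_{\rho_j}$ we get
\[
\theta(\gamma_k)(x_1,\dots,x_s)=\rho(h_k)(x_1,\dots,x_s)=\bigl(\rho_1(h_k)x_1,\dots,\rho_s(h_k)x_s\bigr),
\]
which is exactly the asserted block-diagonal formula. For $Y=\P^1$ the loops $\gamma_k$ generate $\pi_1(U)$ subject only to $g_1\cdots g_r=1$ (cf.\ \ref{braid}), so this already determines $\theta$; for general $Y$ one records in addition $\theta(\alpha_j)=\rho(\Phi(\alpha_j))$ and $\theta(\beta_j)=\rho(\Phi(\beta_j))$, which are block-diagonal for the same reason.

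The one genuinely delicate point is the second step: orienting and normalizing $\gamma_k$ so that $\Phi(\gamma_k)$ is precisely the distinguished generator $h_k$ with $\eta_{x_{j_k}}(h_{j_k})=\xi_{e_k}$, and not one of its other powers; everything else is bookkeeping with the regular representation and its isotypic components.
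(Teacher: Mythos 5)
Your argument is correct and follows essentially the same route as the paper, whose ``proof'' is precisely the discussion preceding the statement: the factorization $\theta=\rho\circ\Phi$ from diagram \ref{commutative repres}, the identification of the monodromy of a small loop $\gamma_k$ with the sheet permutation $D_g\mapsto D_{h_{j_k}\cdot g}$ given by the local monodromy $h_k$, and the isotypic decomposition \ref{local system decomposition} yielding the block-diagonal form. Your extra care about the orientation of $\gamma_k$ and the normalization $\eta_{x_{j_k}}(h_{j_k})=\xi_{e_k}$, and the remark that a different base sheet replaces $h_k$ by a conjugate, is a welcome sharpening of what the paper leaves implicit.
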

\subsection*{Dual and conjugate local systems}
Let $L$ be a local system of $\mathbb{C}$-vector spaces over a topological space $X$. To this local system, one can associate another local system called \emph{the dual local system} denoted by $L^{\vee}$ given by $L^{\vee}(\sU):=L(\sU)^{\vee}=\Hom(L(\sU),\mathbb{C})$ for any open subset $\sU$. The local monodromy of $L^{\vee}$ is given by the dual map $\theta^{\vee}$.

\subsection{The quotient Galois cover} \label{quotient Galois cover}
Let $N\lhd G$ be a normal subgroup of $G$. The Galois cover $f:C\to Y$ factors through a Galois cover $\overline{f}:\overline{C}=C/N\to Y$ with Galois group
$\overline{G}=G/N$. Let $\Delta=\br(f)$ as before  and $\overline{\Delta}=\br(\overline{f})$. In order to compute the monodromy representation of the local system $\overline{f}_*(\mathbb{C})\mid_{Y\setminus \overline{\Delta}}$, we will need the following.
\begin{lemma} \label{irr rep of quotient}
Over any field the irreducible representations of the quotient group $G/N$ are just the irreducible representations of $G$ whose restriction to $N$ is trivial. Moreover, let $\mu:\overline{G}\to \gl_{m}(\mathbb{C})$ be an irreducible representation of $\overline{G}$ and $q:G\to \overline{G}$ be the natural quotient map. Then $\mu\circ q:G\to \gl_{m}(\mathbb{C})$ is an irreducible representation of $G$. In particular, the (multi)set of degrees of irreducible representations of $\overline{G}$ is a sub-(multi)set of that of degrees of irreducible representations of $G$.
\end{lemma}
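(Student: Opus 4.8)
The plan is to deduce everything from the universal property of the quotient homomorphism $q\colon G\to\overline{G}=G/N$ together with the surjectivity of $q$; since only these formal facts enter, the argument is valid over an arbitrary field.

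First I would set up the basic correspondence. If $\rho\colon G\to\gl(V)$ is a representation with $\rho|_N$ trivial, then $N\subseteq\ker\rho$, so $\rho$ factors uniquely as $\rho=\overline{\rho}\circ q$ for a homomorphism $\overline{\rho}\colon\overline{G}\to\gl(V)$; conversely every representation $\overline{\rho}$ of $\overline{G}$ yields $\overline{\rho}\circ q$, a representation of $G$ that kills $N$. These constructions are inverse to each other, so we obtain a bijection between representations of $\overline{G}$ and representations of $G$ trivial on $N$, and this bijection visibly preserves the degree.

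Next I would show the correspondence matches up subrepresentations. Writing $\rho=\overline{\rho}\circ q$ and using that $q$ is surjective, one has $\rho(G)=\overline{\rho}(q(G))=\overline{\rho}(\overline{G})$ as subgroups of $\gl(V)$. Hence a subspace $W\subseteq V$ is $\rho$-invariant if and only if it is $\overline{\rho}$-invariant, and in particular $\rho$ is irreducible exactly when $\overline{\rho}$ is. Restricting the bijection above to the irreducible objects proves the first assertion. The ``moreover'' part is then just the direction $\mu\mapsto\mu\circ q$ of this bijection: the representation $\mu\circ q$ has the same underlying vector space as $\mu$, so it is an irreducible representation of $G$ of degree $m$.

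For the final statement about multisets of degrees, I would check that $\mu\mapsto\mu\circ q$ is injective on isomorphism classes. If $T$ intertwines $\mu_1\circ q$ and $\mu_2\circ q$, then $T\mu_1(\overline g)=\mu_2(\overline g)T$ for all $\overline g\in\overline{G}$, again because $q$ is onto, so $T$ already intertwines $\mu_1$ and $\mu_2$. Thus $\mu\mapsto\mu\circ q$ is a degree-preserving injection from isomorphism classes of irreducible $\overline{G}$-representations into isomorphism classes of irreducible $G$-representations, which is exactly the asserted inclusion of multisets of degrees. There is no genuine obstacle in this proof; the only thing to keep track of is the repeated appeal to surjectivity of $q$, which is what lets invariance, irreducibility, and isomorphism all be transported in both directions.
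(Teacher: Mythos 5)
Your argument is correct and complete: the factorization through $q$, the equality $\rho(G)=\overline{\rho}(\overline{G})$ used to transport invariant subspaces and intertwiners, and the resulting degree-preserving injection on isomorphism classes are exactly the standard proof, valid over any field. The paper itself states this lemma without proof, treating it as well known, and your write-up is precisely the argument it implicitly relies on, so there is nothing to reconcile.
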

The following lemma will be useful to understand the ramification behavior of the quotient
cover.
\begin{lemma} \label{ramification of quotient}
Let $f:C\to Y$ be a Galois $G$-cover of degree $|G|=n$ and $N\lhd G$ a normal subgroup. Consider the following diagram in which $\overline{f}:\overline{C}\to Y$ is the quotient cover with the notations as above. 
\[\begin{tikzcd}
C \arrow{rr}{f} \arrow[swap]{dr}{\varphi} & & Y \\
& \overline{C} \arrow[swap]{ur}{\overline{f}}
\end{tikzcd}\]
Then
\begin{enumerate}
\item  $\overline{\Delta}\subseteq \Delta$, i.e., the branch points of $\overline{f}$ are also branch points of $f$. \item $\ram(\varphi)\subseteq \ram(f)$, i.e., the ramification points of $\varphi:C\to\overline{C}=C/N$ are also ramification points of $f$. \item $\ram(\overline{f})\subseteq \varphi(\ram(f))$, i.e., every ramification point of $\overline{f}$ is the image of a ramification point of $f$ under $\varphi$ and the inertia group of a ramification point $\overline{p}=\varphi(p)$ of $\overline{f}$ is (isomorphic to) $G_p/N\cap G_p$.
\end{enumerate}
\end{lemma}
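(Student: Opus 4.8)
The plan is to work locally around each branch point and to use the basic compatibility of inertia groups with the three morphisms $f$, $\varphi$, $\overline{f}$, together with the standard fact (recalled in \S\ref{g-sheaf} and the opening of \S\ref{quotient Galois cover}) that the stabilizer of a point in a Galois cover is cyclic and that $f^{-1}(\br(f))=\ram(f)$. First I would fix a point $p\in C$ and write $G_p=H_p=\langle h_p\rangle$ for its inertia group under $f$, set $\overline{p}=\varphi(p)$ and $y=f(p)=\overline{f}(\overline{p})$. The key observation is that the inertia group of $p$ with respect to the intermediate cover $\varphi\colon C\to \overline{C}$ is exactly $G_p\cap N$: an element $\sigma\in G$ fixes $p$ as a point of $C$ lying over $\overline{C}$ precisely when $\sigma\in N$ and $\sigma\cdot p=p$, since $\varphi$ is the quotient by $N$. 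Likewise the inertia group of $\overline{p}$ with respect to $\overline{f}\colon\overline{C}\to Y$ is the image of $G_p$ in $\overline{G}=G/N$, i.e. $G_p N/N\cong G_p/(G_p\cap N)$; this is where one uses that $G$ (hence $G_p$) acts transitively on $f^{-1}(y)$ so that the stabilizer in $\overline{G}$ of $\overline{p}$ is covered by the stabilizer in $G$ of $p$. I expect the verification of these two local identifications to be the main (only real) obstacle; everything else is formal.

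Granting this, part (2) is immediate: if $p\in\ram(\varphi)$ then $G_p\cap N\neq 1$, hence $G_p\neq 1$, so $p\in\ram(f)$. For part (3): a point $\overline{p}\in\ram(\overline{f})$ has nontrivial inertia $G_p N/N$ for any $p\in\varphi^{-1}(\overline{p})$ (such $p$ exists since $\varphi$ is surjective), so $G_p\not\subseteq N$, in particular $G_p\neq 1$ and $p\in\ram(f)$; thus $\overline{p}=\varphi(p)\in\varphi(\ram(f))$, and the inertia group of $\overline{p}$ is $G_p/(G_p\cap N)$, which (writing $G_p=G_p$ for the stabilizer, and noting $N\cap G_p = N\cap G_p$) is exactly the asserted $G_p/(N\cap G_p)$. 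For part (1): if $\overline{p}$ lies over a branch point $y$ of $\overline{f}$, i.e. $\overline{f}$ is ramified at $\overline{p}$, then by part (3) applied to any $p$ over $\overline{p}$ we get $G_p\neq 1$, so $f$ is ramified at $p$ and $y=f(p)\in\br(f)$; hence $\overline{\Delta}=\br(\overline{f})\subseteq\br(f)=\Delta$.

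Finally I would phrase the local identifications cleanly using the model near a branch point described before Definition~\ref{datum}: around $y$ the cover $f$ looks like $z\mapsto z^{1/e}$ with $e=|G_p|$, and the intermediate cover $\varphi$ near $p$ is $z\mapsto z^{1/e'}$ with $e'=|G_p\cap N|$, while $\overline{f}$ near $\overline{p}$ is $z\mapsto z^{1/(e/e')}$; the composite of ramification indices then reads $e=e'\cdot(e/e')$, which both confirms $G_p\cap N\trianglelefteq G_p$ with quotient of the right order and makes the inclusions $\overline{\Delta}\subseteq\Delta$ and $\ram(\varphi)\subseteq\ram(f)$ transparent. Alternatively, and perhaps more cleanly, one can avoid local coordinates entirely and argue purely group-theoretically: since $\deck(C/Y)\cong G$ acts transitively on fibres with point-stabilizers the inertia groups, the statements reduce to the elementary facts that $\mathrm{Stab}_N(p)=N\cap\mathrm{Stab}_G(p)$ and that the stabilizer of an $N$-orbit under the induced $G/N$-action is the image of a point-stabilizer, together with $f^{-1}(\br f)=\ram f$. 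I would present the group-theoretic version as the main proof and mention the local-coordinate picture as a remark.
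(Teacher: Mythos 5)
Your proposal is correct, but it takes a genuinely different route from the paper. You derive everything from the two stabilizer identifications $\mathrm{Stab}_N(p)=N\cap G_p$ and $\overline{G}_{\overline{p}}=G_pN/N\cong G_p/(N\cap G_p)$, proving the latter by the orbit argument (if $\overline{g}\,\overline{p}=\overline{p}$ then $g\cdot p$ lies in the $N$-orbit $\varphi^{-1}(\overline{p})=Np$, so $\nu g\in G_p$ for some $\nu\in N$ and $\overline{g}\in G_pN/N$), and you then read off (1), (2), (3) as formal consequences. The paper instead proves (1) by a fibre-counting/degree argument ($|\overline{f}^{-1}(b)|<\deg\overline{f}$ forces $|f^{-1}(b)|<\deg f$ since $f=\overline{f}\circ\varphi$), proves (2) exactly as you do, and establishes the hard inclusion $\overline{G}_{\overline{p}}\subseteq NG_p/N$ in (3) by an analytic local argument: it invokes Miranda's Proposition 3.3 to produce a $\overline{G}_{\overline{p}}$-invariant neighbourhood of $\overline{p}$, pulls it back, and uses openness of $\varphi$ together with $U_p\cap g\cdot U_p\neq\emptyset$ to place $g$ in $G_p$. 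Your group-theoretic version is more elementary and self-contained (it needs nothing beyond the fact that $\varphi$ is the quotient by $N$ and that inertia groups are point stabilizers of the deck action), and it cleanly handles the coset ambiguity in the representative $g$ of $\overline{g}$, whereas the paper's local argument is the one that generalizes to settings where one wants actual invariant coordinate neighbourhoods (e.g.\ to compute local normal forms of the quotient map). Your observation that (1) follows from the stabilizer computation rather than from counting is also fine, since for Galois covers a branch point of $\overline{f}$ has all its $\overline{f}$-preimages ramified.

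One small repair: your stated justification for $\overline{G}_{\overline{p}}\subseteq G_pN/N$, namely that ``$G$ (hence $G_p$) acts transitively on $f^{-1}(y)$,'' is not the right invocation --- $G_p$ does not act transitively on $f^{-1}(y)$, and transitivity of $G$ on that fibre is not what is needed. What is needed (and what you state correctly later) is only that $\varphi^{-1}(\overline{p})$ is the $N$-orbit of $p$, which holds by definition of the quotient $\overline{C}=C/N$; from $g\cdot p\in Np$ one picks $\nu\in N$ with $\nu g\in G_p$. With that phrasing fixed, the argument is complete.
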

\begin{proof}
The first statement is true simply for degree reasons: If $b\in Y$ is a branch point of $\overline{f}$, then $|\overline{f}^{-1}(b)|<\frac{|G|}{|N|}=\deg \overline{f}$. Since $f=\overline{f}\circ \varphi$, it follows that $|f^{-1}(b)|<\deg \overline{f}\cdotp |N|=|G|=\deg f$.\par For the second assertion, let $p\in C$ be a ramification point of
$\varphi$. Then the stabilizer of $p$ in $N$ and hence in $G$ is non-trivial. For the statements in (3), let $\overline{p}=\varphi(p)$ be a ramification point of
$\overline{f}$. Then by definition, $b:=\overline{f}(\overline{p})$ is branch a point of
$\overline{f}$. By (1) this is also a branch point of $f$ and since the covers are Galois, every point in $f^{-1}(b)$ is a ramification point of $f$. It suffices to show that $p\in f^{-1}(b)$ which is evident by $f=\overline{f}\circ\varphi$. Now let $\overline{G}_{\overline{p}}=\langle\overline{g}\rangle$ be the inertia group of $\overline{p}$ in $\overline{G}$, where $\overline{g}=gN$ with $g\in G$. Hence $\overline{g}(\overline{p})=\overline{p}$ or equivalently $\varphi(g(p))=\varphi(p)$. We claim that $\overline{G}_{\overline{p}}= NG_p/N\cong G_p/N\cap G_p$. The inclusion $NG_p/N\subset \overline{G}_{\overline{p}}$ is clear. We show the other inclusion. By \cite{Mir}, Proposition 3.3(a), there exists an open set $\overline{U}_{\overline{p}}$ in $\overline{C}$ which is invariant under $\overline{G}_{\overline{p}}$. Let $U_p$
be the corresponding open neighborhood of $p$ in $C$. Since $\varphi$ is a holomorphic map between compact Riemann surfaces, it is also open, so shrinking if necessary, we may assume that $\varphi(U_p)\subset \overline{U}_{\overline{p}}$. Furthermore, the condition $\varphi(g(p))=\varphi(p)$ implies that $U_p\cap g\cdotp U_p\neq \emptyset$. Now \cite{Mir}, Proposition 3.3(c) implies that $g\in G_p$.
\end{proof}
We remark that all of the inclusions in Lemma \ref{ramification of quotient} are in general proper.\par
Suppose without loss of generality that $\rho_1,\dots, \rho_l$ are the irreducible representations of $G$ that have the normal subgroup $N$ contained in their kernels. Using Lemma \ref{irr rep of quotient}, it follows that the irreducible representations of $\overline{G}$ are precisely $\overline{\rho}_1,\dots, \overline{\rho}_l$ where $\overline{\rho}_i:\overline{G}\to \gl_{n_i}(\mathbb{C})$ is the reduction of $\rho_i$ to $\overline{G}$ for $1\leq i\leq l$. By Lemma \ref{ramification of quotient}, a ramification point of $f$ is a ramification point of the quotient cover $\overline{f}$ if and only if the inertia group $H_p\not\subset N$, in which case the ramification index of $\overline{p}\in \overline{C}$ is $|H_p|/|N\cap H_p|$. Let $\overline{H}_p=H_p/N\cap H_p=\langle\overline{h}_p \rangle$. The cover $f:C\to Y$ factors through a Galois cover $\overline{f}:\overline{C}\to Y$ whose associated local system is given by
\begin{proposition}
The local system $\overline{f}_*(\mathbb{C})\mid_{Y\setminus \overline{\Delta}}$ associated to the Galois cover $\overline{f}:\overline{C}\to Y$ is given by the monodromy representation $\overline{\theta}:\pi_1(Y\setminus\overline{\Delta})\to \gl_{\overline{n}}(\C)$ as follows
\begin{equation}
\overline{\theta}(\gamma_k)(\begin{bmatrix}
x_1 \\
x_{2} \\
\vdots \\
x_l \\
\end{bmatrix})=\begin{bmatrix}
\overline{\rho}_1(\overline{h}_k)(x_1) & & &0\\
& \overline{\rho}_2(\overline{h}_k)(x_2)& &\\
& & \ddots &\\
0& & & \overline{\rho}_l(\overline{h}_k)(x_l)
\end{bmatrix},
\end{equation}
where $\gamma_k$ is a closed path around the branch point $t_k$ and $x_j$ is a local section of $\L_{\rho_j}$.  
\end{proposition}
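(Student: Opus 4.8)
The plan is to observe that $\overline{f}\colon\overline{C}\to Y$ is again a Galois covering of curves, this time with Galois group $\overline{G}=G/N$ and branch locus $\overline{\Delta}$, and then to apply Theorem~\ref{local system repres} verbatim to $\overline{f}$. The whole task thereby reduces to pinning down, for the cover $\overline{f}$, two ingredients: the irreducible representations of $\overline{G}$ together with the resulting eigensheaf decomposition of $\overline{f}_*(\C)\mid_{Y\setminus\overline{\Delta}}$, and the normalized local monodromy of $\overline{f}$ at each branch point $t_k\in\overline{\Delta}$. Both are supplied by Lemmas~\ref{irr rep of quotient} and~\ref{ramification of quotient}, so the argument is essentially a matter of feeding these inputs into the theorem in the right order.

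First I would handle the representation-theoretic side. By Lemma~\ref{irr rep of quotient}, the irreducible representations of $\overline{G}$ are precisely $\overline{\rho}_1,\dots,\overline{\rho}_l$, the reductions to $\overline{G}$ of those $\rho_i$ with $N\subseteq\ker\rho_i$, and $\deg\overline{\rho}_j=\deg\rho_j=n_j$. Running the construction that leads to~\eqref{local system decomposition} for $\overline{f}$ in place of $f$ then yields $\overline{f}_*(\C)\mid_{Y\setminus\overline{\Delta}}=\bigoplus_{j=1}^{l}\overline{\L}_{\overline{\rho}_j}$, with $\overline{\L}_{\overline{\rho}_j}$ a local system of rank $n_j$. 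Next I would record that on the smaller open set $U=Y\setminus\Delta$ the two pictures are compatible: by~\eqref{commutative repres} the epimorphism attached to $\overline{f}$ is $\overline{\Phi}=q\circ\Phi$, whence $\overline{\rho}_j\circ\overline{\Phi}=\rho_j\circ\Phi$ and therefore $\overline{\L}_{\overline{\rho}_j}\mid_U\cong\L_{\rho_j}\mid_U$ for $1\le j\le l$. This is what justifies writing the local sections in the statement as sections of $\L_{\rho_j}$; equivalently, $\bigoplus_{j\le l}\L_{\rho_j}$ is exactly the part of $f_*(\C)\mid_U$ with trivial monodromy around every $t_k\in\Delta\setminus\overline{\Delta}$ (there the local monodromy lies in $N\subseteq\ker\rho_j$), so that summand descends to a local system on $Y\setminus\overline{\Delta}$, which is $\overline{f}_*(\C)\mid_{Y\setminus\overline{\Delta}}$.

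Second I would treat the local monodromies. Fix $t_k\in\overline{\Delta}$, pick $p\in f^{-1}(t_k)$ with inertia group $H_k=\langle h_k\rangle$ of order $e_k$, and set $\overline{p}=\varphi(p)$ and $d_k=|N\cap H_k|$. By Lemma~\ref{ramification of quotient}(3), $\overline{p}$ is a ramification point of $\overline{f}$ with inertia group $\overline{H}_k=H_k/(N\cap H_k)=\langle\overline{h}_k\rangle$, $\overline{h}_k=h_kN$, and ramification index $\overline{e}_k=e_k/d_k$. It then remains to check that $\overline{h}_k$ is the \emph{local monodromy} of $\overline{f}$ at $\overline{p}$ in the normalized sense defined above (via a primitive character), not merely some generator of $\overline{H}_k$. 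I would do this using the local description of a branched cover near a branch point: in suitable charts around $t_k$, $\overline{p}$, $p$ the three maps are $f\colon z\mapsto z^{1/e_k}$, $\overline{f}\colon z\mapsto z^{1/\overline{e}_k}$ and $\varphi\colon w\mapsto w^{d_k}$, so lifting a small positively oriented loop around $t_k$ multiplies the $f$-endpoint by $\exp(2\pi i/e_k)$ --- this is the normalization $\eta_p(h_k)=\xi_{e_k}$ --- and the $\overline{f}$-endpoint by $\exp(2\pi i/\overline{e}_k)$. After the compatible choice of the primitive root $\xi_{\overline{e}_k}:=\xi_{e_k}^{d_k}$, the generator of $\overline{H}_k$ realizing it under the primitive character is exactly $\overline{h}_k$; and of course $\overline{\rho}_j(\overline{h}_k)=\rho_j(h_k)$ since $\rho_j$ factors through $q$.

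With both ingredients in place, Theorem~\ref{local system repres} applied to $\overline{f}$ gives immediately that $\overline{\theta}(\gamma_k)$ acts on $\bigoplus_{j=1}^{l}\overline{\L}_{\overline{\rho}_j}$ by the block-diagonal matrix with blocks $\overline{\rho}_j(\overline{h}_k)$, which is the asserted formula; here the loops $\gamma_k$ run over small loops around the points of $\overline{\Delta}\subseteq\Delta$, in agreement with the fact that $\gamma_k$ becomes trivial in $\pi_1(Y\setminus\overline{\Delta})$ once $t_k\in\Delta\setminus\overline{\Delta}$. The one genuinely delicate point --- where I expect to spend the most care --- is the normalization in the third step: making a \emph{coherent} choice of the roots $\xi_{\overline{e}_k}$ out of the $\xi_{e_k}$ at all branch points simultaneously, and checking independence of the auxiliary choice of $p$ in the fibre (replacing $p$ conjugates $h_k$ and $\overline{h}_k$ by the same element, so each block $\overline{\rho}_j(\overline{h}_k)$ changes only by conjugation, matching the indeterminacy already present in Theorem~\ref{local system repres}). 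Everything else is bookkeeping with the decompositions~\eqref{local system decomposition},~\eqref{coh decomp} and the diagram~\eqref{commutative repres}.
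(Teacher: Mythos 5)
Your proposal is correct and takes essentially the same route as the paper, which gives no separate proof but obtains the proposition from the preceding discussion: Lemma~\ref{irr rep of quotient} for the irreducible representations of $\overline{G}$, Lemma~\ref{ramification of quotient} for the inertia data of $\overline{f}$, and Theorem~\ref{local system repres} applied to $\overline{f}$ --- precisely your two ingredients, with your normalization check $\xi_{\overline{e}_k}=\xi_{e_k}^{d_k}$ a useful added detail. (Only the parenthetical claim that $\bigoplus_{j\le l}\L_{\rho_j}$ is \emph{exactly} the part of $f_*(\C)\mid_U$ with trivial monodromy around the points of $\Delta\setminus\overline{\Delta}$ overstates what is needed, and nothing in your argument depends on it.)
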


\subsection{$\L_{\rho}$-valued paths} \label{valuedpath}
Let $\gamma:[0,1]\to Y$ be a path in $Y$. We may assume that $\gamma(0,1)\subset \sU\subset U$, where $\sU$ is a simply connected open subset of $U=Y\setminus \Delta$. If this is not the case, decompose $\gamma$ into such paths. Consider the Galois cover $f:C\to Y$. The curve $\gamma$ has $n=|G|$ liftings $\{\gamma_{g_1},\dots, \gamma_{g_n}\}$ to $C$ (where $G=\{g_1,\dots, g_n\}$). Let $\mathcal{C}$ be the free abelian group generated by the paths on $C$. Then
$\mathcal{C}\otimes_{\mathbb{Z}}\mathbb{C}$ is the vector space of $\mathbb{C}$-valued paths on $C$. Recall the decomposition $f_*(\mathbb{C})\mid_{U}=\oplus\mathbb{L}_{\rho_j}$. Let $c_j\in\mathbb{L}_{\rho_j}(\sU)$ be a section over $\sU$. Since $f$ is unramified over $\sU$ and $\sU\subset U$ is simply connected by assumption, $f^{-1}(\sU)=\bigsqcup\limits_{i=1}^{n}\sV_i$ for open sets $\sV_i\subset C$. Restricted to each $\sV_i$, $f$ induces a homeomorphism $\sV_i\cong\sU$. Moreover, notice that
$f_*(\mathbb{C})\mid_{U}(\sU)=\mathbb{C}(f^{-1}(\sU))=\mathbb{C}(\bigsqcup\limits_{i=1}^{n}\sV_i)=\bigoplus\limits_{i=1}^{n}\mathbb{C}$. Hence for each open sheet $\sV_i$ over $\sU$, there are local sections $c_{j_i}\in\mathbb{C}(\sV_i)=\mathbb{C}$ that give rise to the section $c_j\in\mathbb{L}_{\rho_j}(\sU)\subset f_*(\mathbb{C})\mid_{U}(\sU)$. A \emph{$\mathbb{L}_{\rho_j}$-valued path} $\widetilde{\gamma}$ is then given by the combination
\begin{equation}\label{valued path}
c_j\cdotp \gamma:=\widetilde{\gamma}=\sum_{i=1}^{n}
c_{j_i}\gamma_{g_i}
\end{equation}
In general, $\mathbb{L}_{\rho_j}$ may extend over a larger subset $Y\setminus \Delta_j\supset Y\setminus\Delta$, i.e., it can happen that the monodromy of
$\mathbb{L}_{\rho_j}$ at some $x\in \Delta$ is trivial. We therefore denote by $\Delta_j\subset \Delta$ the largest subset such that the local system $\mathbb{L}_{\rho_j}$ \emph{does not} extend over any larger subset than $U_j=Y\setminus\Delta_j$. So $\Delta\setminus \Delta_j$ is the set of all points
at which the monodromy of $\mathbb{L}_{\rho_j}$ is trivial.\par Note that the Galois group acts on $\{\gamma_{g_1},\dots, \gamma_{g_n}\}$ by : $g\cdotp \gamma_{g_i}=\gamma_{g\cdotp g_i}$. Under this action, one sees that $G$ acts on the $\mathbb{L}_{\rho_j}$-valued path $\widetilde{\gamma}$ in \ref{valued path} via the representation $\rho_j$. \par Now
suppose $\gamma^{(k)}$ is an oriented path from the branch point $t_k$ to the branch point $t_{k+1}$ that does not go through any other branch points of $f$. In fact, for simplicity, we may take $\gamma^{(k)}$ to be a straight line. Let $c^{(k)}_{j}$ be a non-zero local section of $\mathbb{L}_{\rho_j}$ over the open set $\sU=\gamma^{(k)}((0,1))$ given as above by the sections $\{c^{(k)}_{j_i}\}_{i=1}^{n}$ on the sheets over $\sU$. Then $G$ acts on the $\mathbb{L}_{\rho_j}$-valued path $c^{(k)}_j\cdotp \gamma^{(k)}$ by $\rho_j$ and hence its class $[c^{(k)}_j\cdotp \gamma^{(k)}]$ in the homology group $H_1(C,\mathbb{C})$ belongs to the eigenspace $H_1(C,\mathbb{C})_{\rho_j}$.
\begin{theorem}
Let $Y=\P^1$ and $c^{(k)}_{ij}, i=1,\dots, l_j$ be a $\mathbb{C}$-basis for $\mathbb{L}_{\rho_j}(\sU)$. Then the classes $[c^{(k)}_{ij}\cdotp\gamma^{(k)}], i=1,\dots, l_j, k=1,\dots, n_j$ form a
$\mathbb{C}$-basis for the space $H_1(C,\mathbb{C})_{\rho_j}$. Here $n_j=|\Delta_j|$.
\end{theorem}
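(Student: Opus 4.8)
The plan is to compute $H_1(C,\C)$, together with its $G$-action, from a cell decomposition of $\P^1$ adapted to the arcs $\gamma^{(k)}$, and then to recognise the $\L_{\rho_j}$-valued paths of \ref{valued path} as cellular $1$-chains in the $\rho_j$-eigenspace. First I would join the branch points $t_1,\dots,t_r$ by a system of arcs in $U=\P^1\setminus\Delta$ containing the given $\gamma^{(k)}$, so as to get a finite cell structure on $\P^1$ with $0$-cells the $t_i$, $1$-cells these arcs, and finitely many $2$-cells (open discs lying in $U$). Since $f$ is unramified over $U$ and the $2$-cells are simply connected, pulling back by $f$ gives a $G$-equivariant cell structure on $C$: over each arc and each $2$-cell sits a free $G$-orbit of $|G|$ cells, while $f^{-1}(t_k)$ is the $G$-set $G/H_k$, where $H_k=\langle h_k\rangle$ is the inertia group at $t_k$ as in \ref{local system repres}. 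The cellular chain complex of $C$ is then a complex of $\C[G]$-modules
\[
\C[G]^{a}\;\xrightarrow{\ \partial_2\ }\;\bigoplus_{\text{arcs }e}\C[G]\;\xrightarrow{\ \partial_1\ }\;\bigoplus_{k=1}^{r}\C[G/H_k],
\]
($a$ = number of $2$-cells), with $H_1(C,\C)=\ker\partial_1/\Ii\,\partial_2$; here $\partial_1$ records incidences of arc-lifts with fibres over branch points and $\partial_2$ the attaching words of the $2$-cells, which by \ref{braid} are governed by the relation $g_1\cdots g_r=1$.

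Next I would apply the $\rho_j$-projector $p_j$ of \ref{projector}. Being exact on complexes of $\C[G]$-modules it commutes with homology, so $H_1(C,\C)_{\rho_j}=\ker(\partial_1)_{\rho_j}/\Ii(\partial_2)_{\rho_j}$. For an arc $e$, the summand $\C[G]_{\rho_j}$ of $(C_1)_{\rho_j}$ is by construction the space of combinations $\sum_i c_{j_i}\cdot(\text{lift of }e)$ with $(c_{j_i})$ a section of $\L_{\rho_j}$, i.e.\ exactly the $\L_{\rho_j}$-valued paths over $e$ in the sense of \ref{valued path}. So the basis $c^{(k)}_{1j},\dots,c^{(k)}_{l_j j}$ of $\L_{\rho_j}(\sU)$ yields a basis of the block of $(C_1)_{\rho_j}$ over $\gamma^{(k)}$, and the $l_j n_j$ elements $c^{(k)}_{ij}\cdot\gamma^{(k)}$ form a basis of the part of $(C_1)_{\rho_j}$ supported over $\gamma^{(1)}\cup\dots\cup\gamma^{(n_j)}$.

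Three points then remain. (i) Each $c^{(k)}_{ij}\cdot\gamma^{(k)}$ is a cycle: $(\partial_1)_{\rho_j}$ carries it into the $\rho_j$-isotypic parts of $\C[G/H_k]$ and $\C[G/H_{k+1}]$, which vanish since $\rho_j$ has no invariants under the inertia at $t_k,t_{k+1}\in\Delta_j$ (the monodromy of $\L_{\rho_j}$ being nontrivial there) — this is the claim made just before the theorem. (ii) The classes span $H_1(C,\C)_{\rho_j}$: every $1$-chain on $C$ lies over arcs, and a bookkeeping argument removes the auxiliary arcs (modulo $\Ii\,\partial_2$) and the arcs meeting $\Delta\setminus\Delta_j$ — over which $\L_{\rho_j}$ extends and so supports no nontrivial cycle — leaving the $[c^{(k)}_{ij}\cdot\gamma^{(k)}]$ as a spanning set. (iii) Independence, equivalently $\dim H_1(C,\C)_{\rho_j}=l_j n_j$: this would follow from a dimension count in $(C_\bullet)_{\rho_j}$ once one checks that $\Ii(\partial_2)_{\rho_j}$ meets the span of the $\gamma^{(k)}$-blocks only in the expected subspace. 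Then (ii) and (iii) give the theorem.

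The hard part is (iii), i.e.\ controlling $\Ii(\partial_2)_{\rho_j}$: the attaching maps of the $2$-cells are governed by the relation $g_1\cdots g_r=1$ in $\pi_1(\P^1\setminus\Delta)$ and by the local monodromies $\rho_j(h_k)$ described in \ref{local system repres}, so one must compute this map explicitly and verify that, after the reductions in (ii), it produces no relations among the $[c^{(k)}_{ij}\cdot\gamma^{(k)}]$ beyond those already reflected in the number $l_j n_j$. The careful separation in (ii) of the branch points over which $\L_{\rho_j}$ does, and does not, extend is the other delicate ingredient.
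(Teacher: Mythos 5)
Your overall strategy --- an equivariant cell decomposition of $\P^1$ with the branch points as $0$-cells and the arcs $\gamma^{(k)}$ as $1$-cells, pulled back to $C$, followed by projection onto the $\rho_j$-isotypic parts --- is exactly the strategy of the paper. But your proof has a genuine gap at precisely the point you yourself flag: step (iii), the linear independence, i.e.\ the control of $\Ii(\partial_2)_{\rho_j}$, is never carried out, and step (ii) is likewise only a promised ``bookkeeping argument''. The reason you are forced into this computation is your choice of cell structure: you take the $2$-cells to be small discs contained in $U$, so that they are covered freely and $(C_2)_{\rho_j}$ is a large nonzero space, and then nothing short of an explicit computation of the attaching maps (governed by the relation $g_1\cdots g_r=1$ and the local monodromies $\rho_j(h_k)$) can rule out unwanted relations among the classes $[c^{(k)}_{ij}\cdotp\gamma^{(k)}]$. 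As written, the proposal is a plan whose decisive step is missing.

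The paper sidesteps this entirely by taking the coarsest possible decomposition: only the points of $\Delta_j$ as $0$-cells, only the arcs $\gamma^{(k)}$ as $1$-cells, and the whole complement $Y\setminus\bigcup_k\gamma^{(k)}$ as $2$-cell, so that the $2$-cell is \emph{not} contained in $U$ and its preimage components have nontrivial stabilizers. Then $C_1$ is free over $\C[G]$ on the arcs, while $C_0\cong\bigoplus_k\C[(G:H_k)]$ and $C_2\cong\C[(G:H_\infty)]$ are permutation modules on cosets of nontrivial subgroups, and the whole argument reduces to the single assertion $(C_0)_{\rho_j}=(C_2)_{\rho_j}=0$: with both ends of the complex killed, $H_1(C,\C)_{\rho_j}=(C_1)_{\rho_j}$ is visibly spanned, freely, by the $\L_{\rho_j}$-valued paths, and no boundary map has to be computed. (Your justification in (i) that the $\rho_j$-part of $\C[G/H_k]$ vanishes because the monodromy of $\L_{\rho_j}$ at $t_k\in\Delta_j$ is nontrivial is the same assertion the paper makes for $(C_0)_{\rho_j}=0$, so that part is consistent with the paper.) To repair your argument you would either have to perform the $\partial_2$ computation you defer, or, more efficiently, replace your fine cell structure by the paper's coarse one so that the isotypic vanishing of $C_0$ and $C_2$ does all the work.
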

\begin{proof}
In fact, $\P^1$ has the structure of a finite cell complex by taking the singletons $\{t_1,\dots, t_{n_j}\}$ as as $0$-cells, the intervals $\{\gamma_1,\dots, \gamma_{n_j}\}$ as $1$-cells and $Y\setminus \bigcup\limits_{i=1}^{n_j}\gamma_i$ as $2$-cell. By considering the connected components of the preimages of cells in $C$, we see that $C$ also obtains the structure of a finite cell complex. There is a cellular chain complex of $C$ 
\[0\to C_2\to C_1\to C_0\to 0 \]
equipped with a $G$-action. $C_1$ is the free $\Z[G]$-module generated by $\gamma_1,\ldots, \gamma_{n_j}$. Let $(G:H_k)$ be the set of representatives of the cosets of the centralizer $C(H_k)$ of $H_k$  or equivalently the number of distinct conjugates of the subgroup  $H_k$ in $G$. Then $C_0\cong \bigoplus\limits_{k=1}^{n_j}\Z[(G:H_k)]$, i.e., the free $\Z$-module on the set $(G:H_k)$. Similarly, $C_2\cong\Z[(G:H_{\infty})]$. In both cases, $(C_0)_{\rho_j}=(C_2)_{\rho_j}=0$, which proves the claim.   
\end{proof}
\subsection{Collision of points}
Let $\L_{\rho_j}$ be the eigenspace in the cohomology of a fiber $C=\sC_q$. Let $|\Delta_j|=n_j$, where $\Delta_j$ is the subset as introduced in ~\ref{valuedpath}. This eigenspace is therefore defined over a subset $\{a_1,\ldots, a_{n_j}\}$ of $\Delta$. Consider now the partition $P=\{\{a_1\},\ldots, \{a_{n_{j-1}}, a_{n_j}\}\}$ of $\Delta_j$. In what follows, we set $b=\{a_{n_{j-1}}, a_{n_j}\}$. Let $\psi_P: P\to Y$ be some embedding and the local system $\L(P)_{j}$ on $Y\setminus\psi_P(P)$ with the local monodromy given by an epimorphism $\Phi^{\prime}:\Gamma_{g^{\prime},{n_j-1}}\to G^{\prime}$ as in Definition ~\ref{datum} such that $\Phi^{\prime}(\gamma_i)=\Phi(\gamma_i)$ for $i\leq n_j-2$ and $\Phi^{\prime}(\gamma_b)=\Phi(\gamma_{n_j-1}\gamma_{n_j})$. Here $G^{\prime}$ is the subgroup of $G$ generated by $\gamma_1,\ldots, \gamma_{n_j-2}, \gamma_b$. Theorem ~\ref{RET} then guarantees that there exists a family $\pi(P):\sC(P)\to \sT_{n_j-1}$ of $G^{\prime}$-covers of $Y$. We denote the eigenspace of the higher direct image sheaf $R^1\pi(P)_*(\C)$ with respect to the character given by 1 by $\sL(P)_{j}$. This eigenspace does not neccessarily come from an eigenspace of an irreducible family of $G$-covers by collision of points, even when $G$ is cyclic. The problem is that the resulting family obtained by collision of points is not in general irreducible.
\section{families of Galois covers of $\P^1$}
In this section, we discuss families of Galois covers of the Riemann sphere. Let $\Delta=\{t_1,\ldots, t_r\}$ be a set of points in $\P^1$ and $U=\P^1\setminus\Delta$. We fix a presentation $\Gamma_r$ for $\pi_1(U)$ as in ~\ref{braid}. Then we have an epimorphism of groups $\Phi:\pi_1(U)\to\Gamma_r$. Suppose that $\gamma_i$ is a generator of $\pi_1(U)$ and that $\Phi(\gamma_i)$ has order $m_i$. By the Riemann's existence theorem, any epimorphism of groups as above and any $r$-tuple of integers $(m_1,\ldots, m_r)$ gives rise to a Galois cover of $\P^1$ with $r$ branch points. This can also be done in families: To any datum as above, we can associate a family of $G$-Galois covers of $\P^1$. In this case, let
\[\mathcal{T}_r=(\P^1)^{r+3}\setminus\{t_i\neq t_j\}\]
Then we get a family $f:\sC\to\sT_r$ of $G$-branched covers of $\P^1$ branched over $r$ points. Note that  the space $\mathcal{T}_r$ is the configuration space of $r+3$ points. The fundamental group of $\mathcal{T}_r$ can be identified with the \emph{braid group} on $r+3$ strands in $\P^1$ which is given by the braids leaving the strands invariant. An element of this group is for example given by the Dehn twist $T_{k_1, k_2}$ with $1\leq k_1\leq k_2\leq r+3$. The
Dehn twist $T_{k_1, k_2}$ is given by leaving the branch point $t_{k_2}$ run counterclockwise around $t_{k_1}$. Now consider a fiber $C=\sC_q$ of $\sC$. The fiber $C$ is by construction a $G$-Galois cover of $\P^1$ already described above. The Dehn twist $T_{l, l+1}$ leaves $\delta_k$ invariant for all $k$ with $|l-k|>1$. If $G$ is abelian, then the matrix $M_{l, l+1}$ of $T_{l, l+1}$ has only the entries  $a_{l,l}, a_{(l-1),l}, a_{(l+1),l}$ non-zero and also $a_{k,k}=1$ for $k\neq l$. All other entries are equal to 0.  If $G$ is abelian, we therefore have
\begin{lemma}
$\det T_{l, l+1}=a_{l,l}$. Also the eigenvalues of $T_{l, l+1}$ are given by 1 and $a_{l,l}$. Furthermore the matrix induced by the Dehn twist $T_{l, l+1}$ is diagonalizable.
\end{lemma}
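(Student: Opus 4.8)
The plan is to read everything off the explicit shape of $M:=M_{l,l+1}$ recorded just above. Let $d$ be the size of $M$ (the number of the $\delta_k$). Then $M$ agrees with the identity matrix $I$ in every column except the $l$-th, whose entries are $a_{l-1,l},a_{l,l},a_{l+1,l}$ in rows $l-1,l,l+1$ respectively (with the obvious modification when $l$ is extremal), all remaining entries being $0$. Equivalently $M-I$ is concentrated in its $l$-th column, so $\rk(M-I)\le 1$. To get the determinant I would expand along the $l$-th row: its only nonzero entry is $a_{l,l}$ in position $(l,l)$, and deleting row $l$ and column $l$ erases exactly the two off-diagonal entries $a_{l-1,l},a_{l+1,l}$, leaving the $(d-1)\times(d-1)$ identity; hence $\det M=a_{l,l}$. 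For the eigenvalues, $\rk(M-I)\le 1$ gives $\dim\ker(M-I)\ge d-1$, so $1$ is an eigenvalue of $M$ of geometric (hence algebraic) multiplicity $\ge d-1$; writing $\chi_M(x)=(x-1)^{d-1}(x-\lambda)$ and comparing with $\det M$ (or with $\tr M=(d-1)+a_{l,l}$) forces $\lambda=a_{l,l}$, so the eigenvalues are $1$ and $a_{l,l}$.

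For diagonalizability, when $a_{l,l}\ne 1$ the eigenvalue $1$ has algebraic multiplicity exactly $d-1$, which matches its geometric multiplicity computed above, and the remaining eigenvalue $a_{l,l}$ is simple, so $M$ is diagonalizable. When $a_{l,l}=1$, note that $a_{l,l}$ is governed by the local monodromy of $\L_{\rho_j}$ at $t_l$, i.e. by $\rho_j(h_l)$, which is a root of unity since $G$ is abelian and $\rho_j$ is one-dimensional; triviality of this value means $t_l\notin\Delta_j$ (cf. \S\ref{valuedpath}), so dragging $t_{l+1}$ around $t_l$ does not move any $\delta_k$ and $M=I$, which is diagonalizable. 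Thus $M$ is diagonalizable in all cases.

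The step requiring care is this last one. Indeed $M-I$ satisfies $(M-I)^2=0$, because $M-I$ annihilates every standard basis vector $e_k$ with $k\ne l$ and sends $e_l$ into $\mathrm{span}(e_{l-1},e_{l+1})$; so if one did not know that $a_{l,l}=1$ forces the $l$-th column of $M-I$ to vanish, a non-trivial such column would make $M$ a non-diagonalizable unipotent. I would therefore want the identification of $a_{l,l}$ with the monodromy value $\rho_j(h_l)$, and the precise role of $\Delta_j$, spelled out before this lemma is invoked — in particular I would double-check that the full-twist convention in force here cannot manufacture $a_{l,l}=1$ out of a nontrivial (for instance order-$2$) local monodromy, which is where I expect the only genuine friction to lie. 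Everything else is a short cofactor expansion plus a rank count.
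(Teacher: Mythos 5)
Your computation of the determinant and the eigenvalues, and your diagonalizability argument in the case $a_{l,l}\neq 1$, are correct and essentially what the paper does: the paper's proof just says everything ``follows directly from the description of the matrix'' and records that the minimal polynomial of $T_{l,l+1}$ is $(x-1)(x-a_{l,l})$, which is the same information as your identity $(M-I)(M-a_{l,l}I)=0$ together with the rank-one observation; when $a_{l,l}\neq 1$ this square-free minimal polynomial gives diagonalizability exactly as your multiplicity count does.

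The genuine problem is your treatment of the case $a_{l,l}=1$, and you have correctly located where the friction lies. Your patch rests on the identification $a_{l,l}=\rho_j(h_l)$, so that $a_{l,l}=1$ would force $t_l\notin\Delta_j$ and hence $M=I$. That identification is not justified and is false in general: for a full twist of $t_{l+1}$ around $t_l$, the non-unit eigenvalue on the eigenspace $\L_{\rho_j}$ is governed by the product of the local monodromy values at $t_l$ \emph{and} $t_{l+1}$ (as in the Picard--Lefschetz/Deligne--Mostow picture for abelian covers), so one can have $a_{l,l}=1$ with both local monodromies nontrivial, e.g.\ both of order $2$. In that situation $M-I$ is a nonzero nilpotent with $(M-I)^2=0$, i.e.\ $M$ is a nontrivial unipotent and is \emph{not} diagonalizable --- and this is precisely the case the paper itself exploits immediately afterwards: Proposition~\ref{monod diag} assumes $T_{1,2}\in\SL(H^{0}(\Omega^1_C))$ (that is, $a_{1,1}=1$) and derives infinite monodromy from the twist being a nontrivial unipotent, which contradicts the conclusion $M=I$ you propose. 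So your ``all cases'' claim cannot be repaired along those lines; the correct reading is that the diagonalizability statement (and the paper's minimal-polynomial argument) applies only when $a_{l,l}\neq 1$, and the degenerate case $a_{l,l}=1$ must be excluded rather than argued away. Note also that the paper's own proof shares this gap, since its stated minimal polynomial $(x-1)(x-a_{l,l})$ yields diagonalizability only under the same restriction.
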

\begin{proof}
This follows directly by the description of the matrix $T_{l, l+1}\in\gl (H^{0}(\Omega^1_C))$. In particular, we get once again $a_{l,l}\neq 0$. Also, the minimal polynomial of the matrix of $T_{l, l+1}$ is $(x-1)(x-a_{l,l})$.
\end{proof}
In the particular case that $a_{l,l}=1$, or equivalently, $T_{l, l+1}\in\SL(H^{0}(\Omega^1_C))$, we have the following
\begin{proposition}\label{monod diag}
If the eigenspace $\sL_{\rho_j}$ is of the type $(1,1)$ and $T_{1,2}\in\SL(H^{0}(\Omega^1_C))$, then the monodromy group of $\sL_{\rho_j}$ is infinite.
\end{proposition}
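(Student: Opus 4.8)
The plan is to exhibit a single element of infinite order in the monodromy group of $\sL_{\rho_j}$; since that group contains it, the group is infinite. The natural candidate is $M_{1,2}$, the image of the Dehn twist $T_{1,2}$ in $\gl(V)$, where $V:=(\sL_{\rho_j})_q=H^1(\sC_q,\C)_{\rho_j}$. Because $\sL_{\rho_j}$ is of type $(1,1)$ we have $\dim_\C V=h^{1,0}(\sC_q)_{\rho_j}+h^{0,1}(\sC_q)_{\rho_j}=2$, and $M_{1,2}$ genuinely lies in $\gl(V)$ since the $G$-action is fibrewise over $\sT_r$, hence commutes with the monodromy of $\pi_*$ and preserves each isotypic summand. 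I would first reduce to the case where $t_1,t_2$ lie in $\Delta_j$, i.e. $\sL_{\rho_j}$ is genuinely ramified at these two branch points; otherwise $T_{1,2}$ fixes $V$ pointwise, so the statement has to be read for a twist colliding two branch points seen by $\sL_{\rho_j}$, after which the argument is identical.

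Next I would pin down the shape of $M_{1,2}$. Choosing, as in \S\ref{valuedpath}, a basis of $\sL_{\rho_j}$-valued paths $\delta_1,\delta_2$ for $H_1(\sC_q,\C)_{\rho_j}$ (dual to $V$), the Picard--Lefschetz analysis underlying the preceding Lemma --- now applied to the full eigenspace rather than to its Hodge $(1,0)$-part --- gives that $M_{1,2}$ is lower triangular in this basis, with diagonal entries $a_{1,1}$ and $1$ and sole off-diagonal entry $a_{2,1}$; in particular $\det M_{1,2}=a_{1,1}$ and the eigenvalues of $M_{1,2}$ are $1$ and $a_{1,1}$. By the remark preceding the statement, the hypothesis $T_{1,2}\in\SL(H^0(\Omega^1_C))$ is equivalent to $a_{1,1}=1$, so $M_{1,2}$ becomes unipotent. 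The remaining point --- and the one I expect to cost the most --- is that $M_{1,2}\neq I$, i.e. $a_{2,1}\neq0$: this says that $T_{1,2}$ acts non-trivially on $V$, equivalently that the vanishing cycle of the degeneration $t_1\leftrightarrow t_2$ is non-zero in $H^1(\sC_q,\C)_{\rho_j}$. Here one must be careful, because for a $G$-cover the collision $t_1\leftrightarrow t_2$ is not a single node: the twist contributes a finite-order ``rotational'' part and a unipotent part, the hypothesis $a_{1,1}=1$ exactly kills the rotational part on $\sL_{\rho_j}$, and one still has to verify that the unipotent part survives there. This is where the assumption $t_1,t_2\in\Delta_j$, together with the non-degeneracy of the polarizing Hermitian form of signature $(1,1)$ on $V$ (which forbids a Picard--Lefschetz transformation about a non-zero vanishing class from being trivial), gets used.

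Once $M_{1,2}=\left(\begin{smallmatrix}1&0\\ a_{2,1}&1\end{smallmatrix}\right)$ with $a_{2,1}\neq0$ is established, the conclusion is immediate: $M_{1,2}^{\,n}=\left(\begin{smallmatrix}1&0\\ na_{2,1}&1\end{smallmatrix}\right)\neq I$ for every $n\geq1$, so $M_{1,2}$ has infinite order; equivalently, $M_{1,2}$ is a non-trivial unipotent, hence a parabolic element of the group $U(1,1)$ that contains the monodromy of $\sL_{\rho_j}$. Therefore the monodromy group of $\sL_{\rho_j}$ is infinite. To close, I would contrast this with the generic situation $a_{1,1}\neq1$: there the preceding Lemma gives that $M_{1,2}$ is diagonalizable with eigenvalues $1$ and the root of unity $a_{1,1}$, hence of finite order, so the conclusion genuinely requires the hypothesis $T_{1,2}\in\SL(H^0(\Omega^1_C))$.
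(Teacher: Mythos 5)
Your proposal is correct and follows essentially the same route as the paper: restrict the Dehn twist $T_{1,2}$ to the rank-two eigenspace of type $(1,1)$, use the hypothesis $T_{1,2}\in\SL(H^{0}(\Omega^1_C))$ to make it unipotent, and conclude that a non-trivial unipotent element has infinite order, so the monodromy group is infinite. In fact you are more careful than the paper's own one-line proof, which never addresses the possibility that the unipotent matrix is the identity; your discussion of why the off-diagonal entry is non-zero (the vanishing class being seen by $\sL_{\rho_j}$) fills exactly the point the paper leaves implicit.
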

\begin{proof}
If the Dehn twist of the fiber $C:=\sC_q$ has the asserted form for some $q\in\sT_r$, then the Dehn twist $T_{1,2}$ is a unipotent and upper-tringular matrix by the above description and therefore the claim is proved. 
\end{proof}

\subsection{Dihedral covers of curves}
Throughout this paper, we will fix the following presentation of the dihedral group: 
\begin{equation} \label{dihedral presentation}
D_n\coloneqq \langle a,b| a^n=b^2=1, ab=ba^{-1}\rangle.
\end{equation} 
$D_n$ sits therefore in the following exact sequence.
\begin{equation} \label{dihedral extension}
0\to\Z/n\Z\cong\langle a\rangle\to D_n\to\langle b\rangle\cong\Z/2\Z\to 0
\end{equation}
Now let $X$ be a smooth algebraic curve over $\C$ with $D_n\subset \aut(X)$ and $Y$ a smooth complex algebraic curve such that $X/D_n=Y$ and the cover $\pi: X\to Y$ is Galois. We are interested in the quotient map $X\to X/D_n\coloneqq Y$. Let $A=\langle a\rangle$ and $N=\langle b\rangle$. The factorization \ref{dihedral extension} gives rise to a factorization $\pi \colon X\xrightarrow{p}Z\xrightarrow{q} Y$ where $p,q$ are the corresponding intermediate cyclic Galois covers, i.e., $p\colon X\to Z=X/A$ is a cyclic Galois covering with Galois group $A$ and $q\colon Z\to Y=Z/N$ is a cyclic Galois covering with Galois group $N$. Therefore to study the Galois covering $\pi \colon X\to Y$, it is helpful to study these intermediate cyclic coverings. 
On the other hand, a dihedral Galois covering $\pi:X\to Y$ gives rise to a $D_n$-field extension $\C(Y)\subset \C(X)$. The following result describes the diehdral field extensions of the function field $\C(Y)$. For a proof, see \cite{CP}, Proposition 5.2.
\begin{proposition}\label{dihedral field extension}
Let $\C(Y)\subset \C(X)$ be a $D_n$- extension. There exist $G,F\in \C(Y)$ and $x\in \C(X)$ with
\begin{equation}\label{dihedral equation}
x^{2n}-2Gx^n+F^n=0,
\end{equation}
such that $\C(X)=\C(Y)(x)$ and the $D_n$-action is given by $\sigma(x)=\xi_{n}x$ and $\tau(x)=\frac{F}{x}$, where $\xi_{n}$ is a primitive $n$-th root of unity in $\C$.\par 
Conversely, given $G,F\in \C(Y)$ such that $x^{2n}-2Gx^n+F^n$ is irreducible in $\C(Y)[x]$ then $\frac{\C(Y)[x]}{(x^{2n}-2Gx^n+F^n)}$ is a $D_n$-Galois extension of $\C(Y)$. Hence the normalization of $Y$ in $\frac{\C(Y)[x]}{(x^{2n}-2Gx^n+F^n)}$ is a $D_n$-covering of $Y$. 
\end{proposition}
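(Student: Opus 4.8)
The plan is to prove both implications by Kummer theory, factoring the $D_n$-extension through the intermediate quadratic subextension cut out by the normal cyclic subgroup $A=\langle a\rangle$ from \ref{dihedral presentation}, i.e.\ through the stage $X\xrightarrow{p}Z\xrightarrow{q}Y$ recorded just before the proposition.

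For the forward direction I would first set $K\coloneqq\C(X)^{A}=\C(Z)$, so that $K/\C(Y)$ is Galois of degree $2$ (because $A\lhd D_n$ with $D_n/A\cong\Z/2\Z$) while $\C(X)/K$ is cyclic of degree $n$ with group $A$, generated by $\sigma\coloneqq a|_{\C(X)}$. Since $\C\subset K$ contains all $n$-th roots of unity, Kummer theory furnishes $x\in\C(X)$ with $u\coloneqq x^{n}\in K^{\times}$, $\C(X)=K(x)$ and $\sigma(x)=\xi_{n}x$; after adjusting the choice of Kummer generator by a power $x\mapsto x^{j}$ with $\gcd(j,n)=1$, I may assume it is literally $a$ that acts by $\xi_{n}$. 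Next I would put $\tau\coloneqq b|_{\C(X)}$, so $\tau^{2}=1$ and $\tau$ restricts to the nontrivial element of $\gal(K/\C(Y))$, and then exploit the braid relation: $ab=ba^{-1}$ gives $\sigma\tau=\tau\sigma^{-1}$, hence $\sigma(\tau(x))=\tau(\sigma^{-1}(x))=\xi_{n}^{-1}\tau(x)$. Because $\sigma(x^{-1})=\xi_{n}^{-1}x^{-1}$ as well, the product $x\,\tau(x)$ is $\sigma$-invariant, so it lies in $K$, and being $\tau$-invariant it lies in $\C(Y)$; I name it $F$, so $\tau(x)=F/x$. Applying $\tau$ to $x^{n}=u$ then yields $u\,\tau(u)=F^{n}$, while $u+\tau(u)\in\C(Y)$; writing $2G\coloneqq u+\tau(u)$, the element $u$ is a root of $t^{2}-2Gt+F^{n}$, and substituting $t=x^{n}$ produces equation \ref{dihedral equation}. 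Finally $\C(X)=K(x)=\C(Y)(x^{n})(x)=\C(Y)(x)$, and the $D_n$-action is as asserted by construction.

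For the converse I would set $L\coloneqq\C(Y)[x]/(x^{2n}-2Gx^{n}+F^{n})$, a field with $[L:\C(Y)]=2n$ by the irreducibility hypothesis, and $K\coloneqq\C(Y)(x^{n})\subseteq L$. If $t^{2}-2Gt+F^{n}$ split over $\C(Y)$ then $x^{2n}-2Gx^{n}+F^{n}$ would be reducible, so $[K:\C(Y)]=2$ and $[L:K]=n$; similarly $F\neq0$, since otherwise the polynomial equals $x^{n}(x^{n}-2G)$. By Kummer theory $L=K(x)$ is cyclic over $K$ of degree $n$, with a generator $\sigma$, $\sigma(x)=\xi_{n}x$. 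To see that $L/\C(Y)$ is Galois I would display all $2n$ roots of $x^{2n}-2Gx^{n}+F^{n}$ inside $L$: those whose $n$-th power is $x^{n}$ are the $\xi_{n}^{k}x$, and those whose $n$-th power is the conjugate root $F^{n}/x^{n}$ are the $\xi_{n}^{k}(F/x)$, using $(F/x)^{n}=F^{n}/x^{n}$. Hence $\Gamma\coloneqq\gal(L/\C(Y))$ has order $2n$; I would pick $\tau\in\Gamma$ restricting to the nontrivial automorphism of $K/\C(Y)$ and, after replacing it by $\sigma^{k}\tau$ for the appropriate $k$, arrange $\tau(x)=F/x$, whence $\tau^{2}=1$ and $\tau\sigma\tau^{-1}=\sigma^{-1}$ follow by direct computation. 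Sending $a\mapsto\sigma$, $b\mapsto\tau$ then identifies $\Gamma\cong D_n$, and the normalization $X$ of $Y$ in $L$ is the required $D_n$-covering.

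I expect the argument to be essentially formal, so the main obstacles are bookkeeping ones. The delicate points are the normalizations of the Kummer data — arranging that the chosen generator of $A$ acts \emph{exactly} by $\xi_{n}$ and that $b$ acts \emph{exactly} by $x\mapsto F/x$ rather than merely up to an $n$-th root of unity, which one secures by scaling $x$ by a root of unity and twisting $\tau$ by a power of $\sigma$ — together with the systematic use of the irreducibility hypothesis in the converse, which is exactly what forces $[K:\C(Y)]=2$ and $F\neq0$ and hence guarantees that the Galois group one produces is genuinely $D_n$ and not a proper quotient of it.
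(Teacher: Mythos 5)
The paper itself gives no argument for this proposition --- it only cites \cite{CP}, Proposition 5.2 --- and your Kummer-theoretic proof through the intermediate stage $\C(Y)\subset\C(Z)=\C(X)^{\langle a\rangle}\subset\C(X)$ is essentially the argument of that reference; both directions are correct, including the normalizations of $x$ and $\tau$ and the identification of the Galois group in the converse. The one step that deserves an explicit word is the chain $\C(X)=K(x)=\C(Y)(x^{n})(x)=\C(Y)(x)$, which tacitly assumes $x^{n}\notin\C(Y)$; rather than arguing this, it is cleaner to get $\C(Y)(x)=\C(X)$ directly from the Galois correspondence: no $a^{k}$ with $k\not\equiv 0$ fixes $x$, and $a^{k}b$ fixing $x$ would force $x^{2}=\xi_{n}^{-k}F\in\C(Y)$, contradicting $\sigma(x^{2})=\xi_{n}^{2}x^{2}\neq x^{2}$ when $n\geq 3$. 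Only the degenerate case $n=2$ (the biquadratic Klein group) would require choosing the Kummer generator so that $x^{2}\notin\C(Y)$, a case the dihedral setting of the paper is not really concerned with.
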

Recall that $D_n$ has irreducible degree $2$ representations $\rho_h$. The representation $\rho_h$ can be given by
\begin{equation}\label{irr repres}
\rho_h(a^k)=\begin{bmatrix}
\xi_n^{hk} & 0\\
0& \xi_n^{-hk}\\
\end{bmatrix}, \hspace{2cm} \rho_h(a^kb)=\begin{bmatrix}
0&\xi_n^{hk}\\
\xi_n^{-hk}& 0\\
\end{bmatrix}
\end{equation}
where $\xi_n$ is a primitive $n$-th root of unity. Also, if $n$ is odd, then $1\leq h\leq \frac{n-1}{2}$ and  if $n$ is even, then $1\leq h\leq \frac{n}{2}-1$. For a degree 2 irreducible representation of $D_n$ as above, we denote by $\C(X)^{\rho_h}$ the eigenspace of the action of $D_n$ with resptect to this representation. By Proposition \ref{dihedral field extension}, as a vector space over $\C(z)$, 
\begin{equation}\label{FF eigenspace}
\C(X)^{\rho_h}=\langle x^h, F^hx^{-h}\rangle\oplus\langle x^{n-h}, F^{n-h}x^{-(n-h)}\rangle
\end{equation}
One can also classify the $D_n$-covers of algebraic curves and also determines the eigenspaces of group action on $H^0(C,\omega)$.   
\begin{theorem} [Structure of $D_n$-covers] \label{structure of dihedral} A $D_n$-Galois cover $f \colon C\to Y$ is determined by the following data:
\begin{enumerate}
\item A line bundle $\sL$ and an effective reduced divisor $B_q$ on $Y$ such that $\sL^{\otimes 2}\equiv \sO_Y(-B_q)$.
\item Reduced effective Weil divisors $D_1,\dots, D_n$ on $Z=\spec(\sO_Y\oplus \sL)$ such that $\overline{\tau}(D_{k})=D_{n-k}$ for $1\leq k\leq \lfloor \frac{n}{2}\rfloor$.\item Rank one reflexive sheaves $\sF_1,\dots, \sF_n$ on $Z$, flat over $\sO_Y$ such that $\overline{\tau}^*(\mathcal{F}_{l})=\sF_{n-l}$ on $Z$ for $1\leq l\leq \lfloor \frac{n}{2}\rfloor$ such that the linear equivalence \ref{fundamentalrel2} holds.
\end{enumerate}
\end{theorem}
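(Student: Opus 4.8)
The plan is to exploit the factorization $\pi\colon X\xrightarrow{p}Z\xrightarrow{q}Y$ coming from the exact sequence \ref{dihedral extension}, to classify each cyclic step by its standard building data, and then to read off the extra constraints imposed by the non-abelian relation $ab=ba^{-1}$. First I would dispose of the bottom cyclic piece $q\colon Z\to Y$: this is a $\Z/2\Z$-cover, hence by the classical theory of double covers (cf.\ \cite{CP}) it is equivalent to the datum of a line bundle $\sL$ on $Y$ together with a section of $\sL^{-2}$ whose reduced zero divisor is the branch locus $B_q$; equivalently $\sL^{\otimes 2}\equiv\sO_Y(-B_q)$, and $Z=\spec(\sO_Y\oplus\sL)$ with $\sO_Y$-algebra structure induced by the inclusion $\sO_Y(-B_q)\hookrightarrow\sO_Y$. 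This is item (1), and it also pins down the deck involution $\overline\tau$ of $q$.

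Next I would analyze the top cyclic piece $p\colon X\to Z$, a $\Z/n\Z=\langle a\rangle$-cover of $Z$. By the theory of abelian covers (see \cite{CP}, \cite{M}), such a cover is encoded by the eigensheaf decomposition $p_*\sO_X=\bigoplus_{k}\sF_k^{-1}$ into rank-one reflexive sheaves $\sF_k$ on $Z$, flat over $\sO_Y$, together with effective reduced divisors $D_1,\dots,D_n$ recording where the multiplication maps $\sF_k^{-1}\otimes\sF_\ell^{-1}\to\sF_{k+\ell}^{-1}$ drop rank (equivalently the components of the branch divisor of $p$ sorted by local monodromy $a^k$), subject to the fundamental linear equivalences \ref{fundamentalrel2}; conversely these data reconstruct $X$ as the relative $\spec$ of the resulting sheaf of algebras. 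This yields the ``raw'' form of items (2) and (3).

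The heart of the argument is the dihedral constraint. The relation $\tau\sigma\tau^{-1}=\sigma^{-1}$ forces any lift of $\overline\tau$ to $X$ to conjugate the $\langle a\rangle$-action into its inverse, so $\overline\tau^{\,*}$ carries the $\chi^{k}$-eigensheaf of $p_*\sO_X$ to the $\chi^{-k}$-eigensheaf; tracking indices this is exactly $\overline\tau^{\,*}\sF_\ell\cong\sF_{n-\ell}$, and the corresponding statement for the branch divisors is $\overline\tau(D_k)=D_{n-k}$, the equivalences \ref{fundamentalrel2} being compatible with this involution. Conversely, I would check that $\overline\tau$-symmetric building data of this shape equip the reconstructed cyclic cover $X\to Z$ with a lift of $\overline\tau$ which, together with the $\langle a\rangle$-action, generates a faithful $D_n$-action with $X/D_n=Y$; the ramification bookkeeping (how the inertia group of $\pi$ at a point distributes between the $A$-part and the $N$-part) is governed by Lemma \ref{ramification of quotient}, and the whole correspondence can be cross-checked against the explicit equation $x^{2n}-2Gx^n+F^n=0$ of Proposition \ref{dihedral field extension}, where $z=x^n$ generates $\C(Z)$, $B_q$ is cut out by $G^2-F^n$, and $\tau(x)=F/x$ visibly exchanges the exponents $k$ and $n-k$.

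The step I expect to be the main obstacle is the converse direction together with the $\overline\tau$-equivariant bookkeeping: one must verify that symmetrizing the cyclic data does not destroy normality or connectedness of $X$, that the constructed involution really satisfies $\overline\tau\,a\,\overline\tau^{-1}=a^{-1}$ as an identity of automorphisms of the algebra $p_*\sO_X$ and not merely on eigensheaves, and that when $n$ is even the divisor $D_{n/2}$ (and $D_n$), which are $\overline\tau$-stable rather than exchanged, carry the correct local monodromy datum. On a smooth curve $Z$ the sheaves $\sF_k$ are automatically line bundles and the flatness conditions hold automatically, so the reflexive formulation is retained only for compatibility with the variety case of \cite{CP}; the genuine work is the equivariance matching just described.
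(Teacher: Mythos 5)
The manuscript itself contains no proof of Theorem \ref{structure of dihedral}: it is stated as a structure result imported from \cite{CP} (observe that the linear equivalence \ref{fundamentalrel2} it invokes is never written in this paper), so your outline can only be compared with the arguments of \cite{CP} and \cite{M} --- and it does follow exactly that route: classify $q\colon Z\to Y$ as a double cover via $(\sL,B_q)$, classify $p\colon C\to Z$ as a $\Z/n\Z$-cover by Pardini-type building data $(\sF_l,D_k)$ with the fundamental relations, and then impose the symmetry forced by $\tau\sigma\tau^{-1}=\sigma^{-1}$, namely $\overline\tau^*\sF_l\cong\sF_{n-l}$ and $\overline\tau(D_k)=D_{n-k}$. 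The forward direction of your sketch is fine, as is your remark that on a smooth curve the $\sF_l$ are automatically invertible and flat over $Y$.

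The genuine gap is in the converse, precisely at the point you flag but do not close, and it is more than bookkeeping. An isomorphism $\overline\tau^*(p_*\sO_C)\cong p_*\sO_C$ of $\sO_Z$-algebras (note: the isomorphisms $\overline\tau^*\sF_l\cong\sF_{n-l}$ must be chosen compatibly with the multiplication maps, not merely eigensheaf by eigensheaf, or no action on the algebra is defined at all) produces a lift $\tilde\tau$ of $\overline\tau$ with $\tilde\tau a\tilde\tau^{-1}=a^{-1}$; but any two lifts differ by some $a^m$, and $(a^m\tilde\tau)^2=a^m(\tilde\tau a^m\tilde\tau^{-1})\tilde\tau^2=\tilde\tau^2$, so all lifts have the same square, which the relation $\tilde\tau a^k\tilde\tau^{-1}=a^{-k}$ forces to lie in $\{1,a^{n/2}\}$. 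Thus for $n$ even the data exactly as stated can a priori be realized by a cover carrying an action of the dicyclic group ($\tilde\tau^2=a^{n/2}$) rather than of $D_n$, and no change of lift repairs this; what rules it out is an additional normalization on the chosen isomorphisms (their square must be the identity of the algebra), which is what the precise fundamental relations of \cite{CP} encode, or alternatively an argument through the explicit equation of Proposition \ref{dihedral field extension}, where $\tau(x)=F/x$ is visibly an involution. For $n$ odd your argument does go through, since then $\tilde\tau^2=1$ automatically. Supplying this even-$n$ compatibility (and the algebra-level, not eigensheaf-level, equivariance) is the substantive content your proposal still owes.
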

It follows from Theorem \ref{structure of dihedral} that 
\begin{proposition} \label{metacyclic decomposition}
Let $Y$ be a smooth variety and $f:C\to Y$ a flat $D_n$-cover and let $p\colon C\to Z$ and $q\colon Z\to Y$ be the intermediate coverings of degrees $n$ and $2$ respectively. Then
\begin{equation}
\pi_*\mathcal{O}_X=\bigoplus\limits_{i=1}^{\nu} (\pi_*\mathcal{O}_X)_{\rho_h},
\end{equation}
where $(\pi_*\mathcal{O}_X)_{\rho_h}$ 
\begin{equation}
(\pi_*\mathcal{O}_X)_{\rho_h}=U_{h}\oplus U_{n-h},
\end{equation} 
for $h=1,\dots, \frac{n-1}{2}$ if $n$ is odd and $h=1,\dots, \frac{n}{2}-1$ if $n$ is even and $U_j=q_*(\mathcal{F}_j).$ 
\end{proposition}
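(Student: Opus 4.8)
The plan is to read off the eigensheaf decomposition from the factorization $\pi\colon X\xrightarrow{p}Z\xrightarrow{q}Y$ together with the relation $bab^{-1}=a^{-1}$ in $D_n$. Since $p$ is a cyclic $A$-cover, $p_*\mathcal{O}_X$ is an $A$-sheaf on $Z$, and its canonical decomposition (the construction of \S\ref{g-sheaf} applied to the abelian group $A$) reads $p_*\mathcal{O}_X=\bigoplus_{j\bmod n}\mathcal{F}_j$, where $\mathcal{F}_j$ is the subsheaf on which the chosen generator $a$ acts through the character $a\mapsto\xi_n^{j}$; these are the rank-one reflexive sheaves furnished by Theorem~\ref{structure of dihedral}, locally free in the case of interest since then $Z$ is a smooth curve. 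Applying $q_*$ gives $\pi_*\mathcal{O}_X=q_*p_*\mathcal{O}_X=\bigoplus_{j\bmod n}U_j$ with $U_j:=q_*(\mathcal{F}_j)$, and since $A\lhd D_n$ already acts on $\mathcal{O}_X$ over $Z$, this is the decomposition of $\pi_*\mathcal{O}_X$ into $A$-isotypic pieces inside its ambient $D_n$-sheaf structure.

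Next I would track the action of the remaining generator $b$. If $s$ is a local section of $U_j$, so that $a\cdot s=\xi_n^{j}s$, then $a\cdot(b\cdot s)=b\cdot\bigl((b^{-1}ab)\cdot s\bigr)=b\cdot(a^{-1}\cdot s)=\xi_n^{-j}(b\cdot s)$, so $b$ carries $U_j$ into $U_{-j}=U_{n-j}$; because $b^{2}=1$ this is an isomorphism $U_j\xrightarrow{\sim}U_{n-j}$, which recovers the relation $\overline{\tau}^{*}\mathcal{F}_l\cong\mathcal{F}_{n-l}$ of Theorem~\ref{structure of dihedral}. Fix now $h$ in the stated range, $1\le h\le\frac{n-1}{2}$ for $n$ odd and $1\le h\le\frac{n}{2}-1$ for $n$ even; then $h\neq 0$ and $h\neq n-h$, so $U_h$ and $U_{n-h}$ are distinct summands and $W_h:=U_h\oplus U_{n-h}$ is $D_n$-stable ($A$-stable as a sum of $A$-eigensheaves, and $b$ swaps its two factors). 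To identify the $D_n$-module carried by a fibre of $W_h$, choose a basis $e_1,e_2$ of the fibre of $U_h$ and set $e_i':=b\,e_i$ in the fibre of $U_{n-h}$: on $\langle e_i,e_i'\rangle$ the element $a$ acts as $\mathrm{diag}(\xi_n^{h},\xi_n^{-h})$ and $b$ acts as the swap $\left[\begin{smallmatrix}0&1\\1&0\end{smallmatrix}\right]$, which is exactly $\rho_h$ of \eqref{irr repres}. Hence every fibre of $W_h$ is $\rho_h$-isotypic; equivalently the projector $p_h$ of \eqref{projector} acts as the identity on $W_h$, so $W_h\subseteq(\pi_*\mathcal{O}_X)_{\rho_h}$.

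To upgrade this inclusion to an equality I would compare ranks. Flatness of $\pi$ gives that $\pi_*\mathcal{O}_X$ is locally free of rank one over $\mathcal{O}_Y[D_n]$ (\S\ref{g-sheaf}), so its fibre is $\C[D_n]$ as a $D_n$-module, in which $\rho_h$ occurs with multiplicity $\dim\rho_h=2$; thus $(\pi_*\mathcal{O}_X)_{\rho_h}$ has rank $4$. On the other hand $U_j=q_*\mathcal{F}_j$ has rank $(\deg q)(\rk\mathcal{F}_j)=2$, so $\rk W_h=4$ as well. Since $W_h\hookrightarrow(\pi_*\mathcal{O}_X)_{\rho_h}$ is an inclusion of locally free subsheaves of $\pi_*\mathcal{O}_X$ of equal rank, it is a fibrewise isomorphism, hence an isomorphism, and $(\pi_*\mathcal{O}_X)_{\rho_h}=U_h\oplus U_{n-h}$. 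The outer decomposition $\pi_*\mathcal{O}_X=\bigoplus_{\rho}(\pi_*\mathcal{O}_X)_{\rho}$ over all irreducibles is then the canonical $D_n$-sheaf decomposition \eqref{coh decomp}, the summands attached to the one-dimensional characters being the further splittings of $U_0$ (and, for $n$ even, $U_{n/2}$) under the involution $b$.

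The only delicate point — the main obstacle — is the rank count and the fibrewise $\rho_h$-isotypy at points over the branch locus $\Delta$, where $\mathcal{F}_j$ is a priori only reflexive rather than locally free, so that ``fibre'' and ``subbundle'' require care. When $Y$ is a curve (the setting of this paper) this disappears: $Z$ is then a smooth curve and every reflexive sheaf on it is locally free. For a general smooth $Y$ one instead carries out the identification over the unramified locus $U=Y\setminus\Delta$, where $\pi_*\mathcal{O}_X$ is an honest local system of $D_n$-representations, and then extends $W_h=(\pi_*\mathcal{O}_X)_{\rho_h}$ across $\Delta$ using that both sides are reflexive and agree in codimension one — equivalently, by treating $p_h$ as an idempotent endomorphism of reflexive sheaves throughout.
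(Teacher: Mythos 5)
Your proposal is correct and follows the route the paper intends: the paper states Proposition~\ref{metacyclic decomposition} without a written proof, as an immediate consequence of Theorem~\ref{structure of dihedral}, and your derivation---decomposing $p_*\mathcal{O}_X$ into $A$-eigensheaves $\mathcal{F}_j$, applying $q_*$ to get $\pi_*\mathcal{O}_X=\bigoplus_j U_j$, and using $bab^{-1}=a^{-1}$ to see that $b$ interchanges $U_h$ and $U_{n-h}$, so that $U_h\oplus U_{n-h}$ is fibrewise $\rho_h$-isotypic---is exactly the expected filling-in of that claim. The only wording to tighten is the final step: an inclusion of equal-rank locally free subsheaves is not an equality in general, but here it is because $W_h$ and $(\pi_*\mathcal{O}_X)_{\rho_h}$ are both direct summands of $\pi_*\mathcal{O}_X$ (images of idempotents), or more directly because $\rho_h|_A$ contains only the characters $\xi_n^{\pm h}$, which gives the reverse inclusion $(\pi_*\mathcal{O}_X)_{\rho_h}\subseteq U_h\oplus U_{n-h}$ without any rank count.
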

In this paper, we only apply this theorem to the case $Y=\P^1_{\C}$. Next, let us describe the subgroups of the dihedral group $D_n$. 
\begin{remark} \label{subgroups of dihedral}
The subgroups of the dihedral group $D_n$ are as follows:
\begin{enumerate}
\item The cyclic subgroups which are precisely the subgroups $\langle a^i\rangle, i|n$ of index $2i$ and the subgroups $\langle a^ib\rangle$ of order $2$ (reflections). \item The non-cyclic subgroups are of the form $\langle a^j, a^ib\rangle$ of index $j$, where $j|n$ (but $j\neq n$) and $0\leq i\leq j-1.$
\end{enumerate}
The subgroups of type (2) in Remark \ref{subgroups of dihedral} are isomorphic to the dihedral group $D_{\frac{n}{j}}$. In particular, subgroups of the dihedral group are either cyclic or dihedral. Furthermore, if $n$ is odd, then the only cyclic subgroups of $D_n$ of order two are precisely the subgroups $\langle a^ib\rangle$, whereas, if $n$ is even, there is one additional such subgroup, namely, $\langle a^{\frac{n}{2}}\rangle$.
\end{remark}
In order to determine the ramification behavior of the dihedral covers, we need also to investigate the conjugacy classes of the dihedral group $D_n$. This depends on the parity of $n$. 
\begin{remark}\label{conjugacy of dihedral}
If $n$ is odd, then the conjugacy classes are the identity $\{1\}$, the sets
$\{a^i,a^{-i}\}$ with $i=1,\dots, \frac{n-1}{2}$ and the set $D_n\setminus \langle a\rangle$ (i.e., the set $D_n\setminus \langle a\rangle=\{a^ib\mid 0\leq i\leq n-1\}$ is a single conjugacy class, or in other words, all of the reflections are conjugate to each other). In particular, the number of the conjugacy classes is equal to $\frac{n+3}{2}$. If $n$ is even, the conjugacy classes are the identity $\{1\}$, the sets $\{a^i,a^{-i}\}$ with $i=1,\dots, \frac{n}{2}$ and there are two more classes, namely, $\{a^ib\mid
i\text{ odd}\}$ and $\{a^ib\mid i\text{ even}\}$. In particular, the number of the conjugacy classes is equal to $\frac{n+6}{2}$. It follows from this description of the conjugacy classes that if $n$ is odd, then two cyclic subgroups $\langle a^i\rangle$ and
$\langle c\rangle$ are conjugate if and only if $\langle a^i\rangle=\langle c\rangle$.
\end{remark}
Using Remarks \ref{subgroups of dihedral} and \ref{conjugacy of dihedral}, we obtain the following:
\begin{lemma} \label{conj ram}
Let $f:C\to Y$ be a $D_n$-cover of curves with $\br(f)=\{t_1,\dots, t_r\}$ such that the ramification index of $t_j$ is $e_j$. Then:
\begin{enumerate}
\item If $e_j\neq 2$, the inertia groups of all of the points $x_{j_i}\in f^{-1}(t_j)$ are equal. Consequently, it makes sense to say that the inertia group of $t_j$ is a cyclic group $H_j=\langle h_j\rangle$.\item If $e_j=2$ and $n$ is odd, then the
inertia group $H_{j_i}$ of every $x_{j_i}\in f^{-1}(t_j)$ is of the form $H_{j_i}=\langle a^{k_j}b\rangle$ (i.e., generated by a reflection). If $n$ is even, then either all of the inertia groups $H_{j_i}$ are equal to $\langle a^{\frac{n}{2}}\rangle$ or they are of the form $\langle a^{k_j}b\rangle$ where the $k_j$ are all odd or all even.
\end{enumerate}
\end{lemma}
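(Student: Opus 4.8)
The plan is to combine the general facts about inertia groups of Galois covers (recalled in Section 2) with the explicit description of subgroups and conjugacy classes of $D_n$ given in Remarks \ref{subgroups of dihedral} and \ref{conjugacy of dihedral}. Recall that for any Galois cover, the inertia groups of the points lying over a fixed branch point $t_j$ form a single conjugacy class of cyclic subgroups of $G=D_n$, all of the same order $e_j$. So the entire statement reduces to the following question: for which cyclic subgroups $H\leq D_n$ is it true that every $D_n$-conjugate of $H$ equals $H$ itself? Equivalently, which cyclic subgroups of $D_n$ are normal (or at least self-conjugate)?

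First I would dispose of case (1). Suppose $e_j\neq 2$. By Remark \ref{subgroups of dihedral}, a cyclic subgroup of $D_n$ of order $e_j$ is either of the form $\langle a^i\rangle$ with $i\mid n$ (these have order $n/i \neq 2$ in general, and exhaust all cyclic subgroups of order $>2$) or a reflection subgroup $\langle a^k b\rangle$ of order exactly $2$. Since $e_j \neq 2$, the inertia group $H_j$ must be one of the subgroups $\langle a^i\rangle \leq \langle a\rangle$. But $\langle a\rangle$ is normal in $D_n$ (it has index $2$), and any subgroup of the cyclic group $\langle a\rangle$ is characteristic in $\langle a\rangle$, hence normal in $D_n$. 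Concretely one checks $b\,a^i\,b^{-1} = a^{-i}$, so $b\langle a^i\rangle b^{-1} = \langle a^{-i}\rangle = \langle a^i\rangle$, and conjugation by powers of $a$ fixes $\langle a^i\rangle$ pointwise-as-a-set trivially. Thus all conjugates of $H_j$ coincide, and it makes sense to speak of \emph{the} inertia group $H_j=\langle h_j\rangle$ of $t_j$.

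Next, case (2), where $e_j = 2$. Now the inertia group over $t_j$ is a cyclic subgroup of order $2$. By Remark \ref{subgroups of dihedral} (final paragraph), if $n$ is odd the only order-$2$ subgroups are the reflection subgroups $\langle a^k b\rangle$; by Remark \ref{conjugacy of dihedral} all reflections are conjugate when $n$ is odd (the class $D_n\setminus\langle a\rangle$), so the inertia groups $H_{j_i}$ over the various $x_{j_i}\in f^{-1}(t_j)$ are the subgroups generated by reflections $a^{k_j}b$, and in general genuinely differ (their generators run through a conjugacy class of elements, not a single element). If $n$ is even there is the extra order-$2$ subgroup $\langle a^{n/2}\rangle$, which lies in $\langle a\rangle$ and is therefore normal (as in case (1)); in that situation all the inertia groups equal $\langle a^{n/2}\rangle$. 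Otherwise the inertia groups are reflection subgroups $\langle a^{k_j}b\rangle$, and since for $n$ even the reflections split into the two conjugacy classes $\{a^ib: i\text{ odd}\}$ and $\{a^ib: i\text{ even}\}$, the exponents $k_j$ occurring over a single branch point are all of the same parity. This exhausts all cases. The main point requiring care — really the only nonroutine step — is making sure one has correctly identified, via the two Remarks, exactly which cyclic subgroups are self-conjugate and, in the order-$2$ reflection case, tracking that it is the conjugacy \emph{class} of the generating element (all reflections, resp. a fixed-parity half of them) rather than a single element that controls the inertia, which is why the phrasing in (2) is necessarily about the form $\langle a^{k_j}b\rangle$ rather than about a single well-defined subgroup.
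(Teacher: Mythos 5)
Your proof is correct and follows essentially the same route as the paper, which gives no written argument beyond asserting that the lemma follows from Remarks \ref{subgroups of dihedral} and \ref{conjugacy of dihedral}: you combine the general fact that the inertia groups over a fixed branch point $t_j$ form a conjugacy class of cyclic subgroups of order $e_j$ with the classification of cyclic subgroups of $D_n$ and their conjugacy (subgroups of $\langle a\rangle$ are normal; for $n$ even the reflections $a^ib$ split into the two parity classes), which is exactly the intended argument. The only point worth making explicit in a written version is the computation $a^m(a^ib)a^{-m}=a^{i+2m}b$ and $(a^mb)(a^ib)(a^mb)^{-1}=a^{2m-i}b$, which justifies that the exponents $k_j$ over a single branch point all have the same parity when $n$ is even.
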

The multiplicity of a given irreducible representation of $G$ in $H^0(C,\Omega^1_{C})$ can be computed by the following classical result of Chevalley and Weil in \cite{CW}.
\begin{theorem}[Chevalley-Weil] \label{Chevalley-Weil}
Let $f:C\to Y$ be a Galois covering of complex algebraic curves with Galois group $G$. Let $e_i$ be the ramification index corresponding to the branch point $t_i \in Y$ and let $g_i$ be an element of order $e_i$ in $G$ corresponding to $t_i$. Let $\rho$ be an irreducible representation of $G$ and $N_{i,\alpha}$ denote the number of eigenvalues of $\rho(g_i)$ that are of the form $\xi_{e_i}^{\alpha}$, where $\xi_{e_i}=\exp({2\pi\sqrt{-1}/e_i})$. Then the multiplicity of $\rho$ in $H^0(C,\Omega^1_{C})$ is given by
\begin{equation}
\mu_{\rho}=d_{\rho}(g_Y-1)+\sum\limits_{i=1}^{r}\sum\limits_{\alpha=0}^{e_i-1}N_{i,\alpha}\langle
-\frac{\alpha}{e_i}\rangle+\epsilon
\end{equation}
Here $\epsilon=1$ if $\rho$ is the trivial representation and $\epsilon=0$ otherwise. Also, note that we denote by $\langle q\rangle$ the fractional part of $q\in\Q$. 
\end{theorem}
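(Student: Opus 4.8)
The plan is to compute the character of the $G$-module $H^{0}(C,\Omega^{1}_{C})$ and extract the multiplicity of $\rho$ by the orthogonality relations. Write $T(g)=\tr\bigl(g\mid H^{0}(C,\Omega^{1}_{C})\bigr)$, so that $\mu_{\rho}=\frac{1}{|G|}\sum_{g\in G}\overline{\chi_{\rho}(g)}\,T(g)$ and it suffices to determine each $T(g)$. For $g=1$ one has $T(1)=g_{C}$, which the Riemann--Hurwitz formula rewrites as $g_{C}-1=|G|(g_{Y}-1)+\tfrac12\sum_{i=1}^{r}\tfrac{|G|}{e_{i}}(e_{i}-1)$. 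For $g\neq 1$ the fixed-point set of $g$ on $C$ is exactly the set of ramification points $P$ whose inertia group $\langle h_{P}\rangle$ contains $g$; near such a $P$ the automorphism $g$ equals $h_{P}^{k}$ for some $k$, and $h_{P}$ acts on the tangent line $T_{P}C$ by a primitive $e_{i}$-th root of unity which, by the normalization built into the definition of the local monodromy via its primitive character $\eta_{P}$, is the same root $\lambda_{i}$ for every $P$ lying over a fixed branch point $t_{i}$. The holomorphic Lefschetz fixed-point formula applied to $\Omega^{1}_{C}$ --- together with the fact that $H^{1}(C,\Omega^{1}_{C})\cong\C$ carries the trivial $G$-action --- then expresses $T(g)-1$, for $g\neq1$, as a sum of terms of the form $(\lambda_{i}^{k}-1)^{-1}$, one for each fixed point.

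Next I would substitute these values into $\mu_{\rho}=\frac{1}{|G|}\sum_{g}\overline{\chi_{\rho}(g)}\,T(g)$ and reorganize the resulting double sum over $g\neq1$ and over the fixed points of $g$ as a sum over ramification points $P$ and, for each $P$, over the nontrivial elements of its stabilizer $\langle h_{P}\rangle$. All ramification points over a fixed branch point $t_{i}$ contribute the same amount, because their local monodromies are conjugate in $G$ (so $\chi_{\rho}$ is constant on the relevant powers) and there are exactly $|G|/e_{i}$ of them; this is precisely what absorbs the factor $1/|G|$ and reduces the ramification part of $\mu_{\rho}$ to $\sum_{i=1}^{r}\frac{1}{e_{i}}\sum_{k=1}^{e_{i}-1}\frac{\chi_{\rho}(g_{i}^{k})}{\lambda_{i}^{k}-1}$, where $g_{i}=h_{P}$ is the local monodromy at $t_{i}$. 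The remaining contributions --- the $g=1$ term $d_{\rho}g_{C}$ and the sum of the $+1$'s produced by $H^{1}$ --- collapse, via the orthogonality relation $\sum_{g}\overline{\chi_{\rho}(g)}=|G|\,\epsilon$ and the Riemann--Hurwitz identity above, into $d_{\rho}(g_{Y}-1)+\epsilon$ plus a single extra term $\tfrac{d_{\rho}}{2}\sum_{i}(1-\tfrac{1}{e_{i}})$.

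The last step is the elementary identity that rewrites the ramification sum. Diagonalizing $\rho(g_{i})$ with eigenvalues $\xi_{e_{i}}^{\beta_{1}},\dots,\xi_{e_{i}}^{\beta_{d_{\rho}}}$, it suffices to prove the scalar statement that for a primitive $e$-th root of unity $\zeta$ and any integer $\beta$,
\[
\frac{1}{e}\sum_{k=1}^{e-1}\frac{\zeta^{k\beta}}{\zeta^{k}-1}=\Bigl\langle-\tfrac{\beta}{e}\Bigr\rangle-\frac{e-1}{2e},
\]
which follows from the geometric-series computation $\sum_{k=1}^{e-1}(\zeta^{k}-1)^{-1}=-\tfrac{e-1}{2}$ together with the recursion obtained by comparing the sums for $\beta$ and $\beta-1$. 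Applying this to each eigenvalue and summing over the branch points, the terms $-\tfrac{e_{i}-1}{2e_{i}}$ assemble, once the multiplicities $N_{i,\alpha}$ are tallied, into exactly $-\tfrac{d_{\rho}}{2}\sum_{i}(1-\tfrac{1}{e_{i}})$, cancelling the extra term from the previous paragraph, while the main terms give $\sum_{i}\sum_{\alpha}N_{i,\alpha}\langle-\alpha/e_{i}\rangle$; this yields the stated formula. The genuinely delicate point of the whole argument is the bookkeeping of conventions: which root of unity $\lambda_{i}$ the normalized local monodromy induces on $T_{P}C$, whether the $G$-action on $H^{0}(C,\Omega^{1}_{C})$ is taken to be $g\mapsto g^{*}$ or $g\mapsto(g^{-1})^{*}$, and whether ``the element $g_{i}$ corresponding to $t_{i}$'' in the statement is the local monodromy or its inverse --- these choices are exactly what decide the sign inside $\langle\pm\alpha/e_{i}\rangle$, and one fixes them by calibrating against the trivial representation, for which the formula must return $\mu_{\mathrm{triv}}=g_{Y}$. (An alternative route avoids the Lefschetz formula: the eigensheaf decomposition $f_{*}\Omega^{1}_{C}=\bigoplus_{\rho}(f_{*}\Omega^{1}_{C})_{\rho}$ of \S\ref{g-sheaf} presents $\mu_{\rho}$ as $h^{0}$ of a locally free sheaf of rank $d_{\rho}$ on $Y$ whose degree is computed by the same local root-of-unity data; Riemann--Roch and Serre duality on $Y$ then finish the computation, but controlling the $h^{1}$ term requires essentially the same input.)
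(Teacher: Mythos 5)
The paper offers no proof of this theorem at all: it is quoted as a classical result with a citation to \cite{CW}, so there is no internal argument to measure yours against; what you have supplied is the standard modern proof, and its structure is sound. The character average $\mu_\rho=\frac{1}{|G|}\sum_{g}\overline{\chi_\rho(g)}\,T(g)$, the value $T(1)=g_C$ fed through Riemann--Hurwitz, the Eichler/holomorphic Lefschetz evaluation of $T(g)-1$ for $g\neq 1$ over the fixed points (which are indeed exactly the ramification points whose inertia group contains $g$, with conjugate local monodromies acting by the same root of unity on the tangent lines over a given branch point), the regrouping of the double sum by branch points using the count $|G|/e_i$ of points in a fibre, and the identity $\frac{1}{e}\sum_{k=1}^{e-1}\zeta^{k\beta}(\zeta^{k}-1)^{-1}=\langle-\beta/e\rangle-\frac{e-1}{2e}$ all fit together as you describe; the identity itself is correct (both sides are periodic in $\beta$ modulo $e$, have the same first difference, and agree at $\beta=0$ via $\sum_{k=1}^{e-1}(\zeta^k-1)^{-1}=-\frac{e-1}{2}$), and the terms $-\frac{d_\rho}{2}\sum_i(1-\frac{1}{e_i})$ do cancel the Riemann--Hurwitz excess, leaving exactly the stated formula. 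The one soft spot is your closing calibration claim: testing against the trivial representation cannot decide the sign inside $\langle\pm\alpha/e_i\rangle$, since for the trivial representation every exponent $\alpha$ is $0$ and both signs return $\mu_{\mathrm{triv}}=g_Y$; an order-two example is equally insensitive because $\langle 1/2\rangle=\langle-1/2\rangle$. To fix the sign you must either do the local computation honestly --- with the paper's normalization $\eta_x(h_x)=\xi_{e}$ of the local monodromy, determine whether $h_x$ acts on a local coordinate at $x$ (equivalently on $T_xC$ or on the cotangent line) by $\xi_e$ or $\xi_e^{-1}$, and record that the module structure on $H^0(C,\Omega^1_C)$ is via $g\mapsto (g^{-1})^*$ --- or calibrate against a nontrivial example with non-self-dual local data, say a cyclic triple cover of $\P^1$, where the two sign choices give different eigenspace dimensions. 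Finally, the alternative you mention in parentheses, computing the degree of the eigensheaf $(f_*\Omega^1_C)_\rho$ and applying Riemann--Roch on $Y$, is closer in spirit to the eigensheaf formalism the paper actually uses (\S\ref{g-sheaf} and the decomposition of $\pi_*\sO_X$ and $f_*\omega_C$), and it has the advantage of concentrating all the convention issues in one place, the local computation of that degree.
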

By combining Theorem \ref{Chevalley-Weil} with the description ~\ref{irr repres} of irreducible $2$-dimensional representations $\rho_h$ of the dihedral group $D_n$, we obtain the following result. Note that we write $\mu_{h}$ for the multiplicity $\mu_{\rho_h}$. 
\begin{corollary} \label{Chevalley-Weil proj}
Let $f:C\to Y$ be a Galois covering of algebraic curves with Galois group $D_n$. Let $\{t_{1},\ldots,t_r\}$ be the branch points and let $\{t_{1},\ldots, t_{r-l}\}$ be the branch points such that $t_i$ has monodromy given by the element $a^{k_i}$ in $D_n$ for some $k_i$ (with the notation in Presentation ~\ref{dihedral presentation}) as in Proposition ~\ref{even}.  Then the multiplicity $\mu_{h}$ of the irreducible representation $\rho_h$ in $H^0(C,\Omega^1_{C})$ is given by
\begin{equation}
\mu_{h}=2(g_Y-1)+\frac{l}{2}+\sum\limits_{i=1}^{r-l}(\langle-\frac{h k_i}{n}\rangle+\langle\frac{h k_i}{n}\rangle).
\end{equation}
Consequently,
$h^0(C)_{\rho_h}=\dim_{\mathbb{C}}H^0(C,\Omega^1_{C})_{\rho_h}=2\mu_{h}$.
\end{corollary}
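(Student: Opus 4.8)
The plan is to apply the Chevalley--Weil formula (Theorem~\ref{Chevalley-Weil}) directly to each of the two-dimensional irreducible representations $\rho_h$ of $D_n$, using the explicit matrices in~\eqref{irr repres}, and then to read off $h^0(C)_{\rho_h}$ from the multiplicity $\mu_h$. The ramification data splits into two kinds of branch points by Lemma~\ref{conj ram}: the $r-l$ points whose local monodromy is a rotation $a^{k_i}$, and the remaining $l$ points whose local monodromy is a reflection $a^{k}b$ (we must check that the even case of Lemma~\ref{conj ram} where an inertia group is $\langle a^{n/2}\rangle$ is being subsumed under the first kind via $k_i=n/2$, $e_i=2$ — this is presumably the role of the reference to Proposition~\ref{even}). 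The element $g_Y-1$ contributes $d_{\rho_h}(g_Y-1)=2(g_Y-1)$ since $d_{\rho_h}=2$, and $\epsilon=0$ because $\rho_h$ is never trivial; these account for the first term.

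\emph{First} I would handle the rotation branch points. For $t_i$ with local monodromy $a^{k_i}$ of order $e_i = n/\gcd(n,k_i)$, formula~\eqref{irr repres} gives $\rho_h(a^{k_i}) = \operatorname{diag}(\xi_n^{hk_i}, \xi_n^{-hk_i})$, so the two eigenvalues are $\xi_n^{hk_i}$ and $\xi_n^{-hk_i}$. Writing each eigenvalue as $\xi_{e_i}^{\alpha}$ and feeding it into $\sum_{\alpha} N_{i,\alpha}\langle -\alpha/e_i\rangle$, the contribution of this branch point becomes $\langle -hk_i/n\rangle + \langle hk_i/n\rangle$ after translating from $e_i$-th roots to $n$-th roots of unity (using that $\langle -\alpha/e_i\rangle = \langle -\alpha\gcd(n,k_i)/n\rangle$ and that $hk_i/n$ has the same fractional part structure). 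Summing over $i = 1,\dots, r-l$ produces exactly the sum $\sum_{i=1}^{r-l}(\langle -hk_i/n\rangle + \langle hk_i/n\rangle)$ in the statement.

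\emph{Next} I would handle the $l$ reflection branch points. Here $e_i = 2$ and the local monodromy is conjugate to some $a^kb$; by~\eqref{irr repres}, $\rho_h(a^kb) = \begin{bmatrix} 0 & \xi_n^{hk} \\ \xi_n^{-hk} & 0\end{bmatrix}$, whose characteristic polynomial is $x^2 - 1$, so its eigenvalues are $+1 = \xi_2^0$ and $-1 = \xi_2^1$ \emph{independently of $h$ and $k$}. Thus $N_{i,0} = N_{i,1} = 1$, and the contribution is $1\cdot\langle 0\rangle + 1\cdot\langle -1/2\rangle = 0 + 1/2 = 1/2$. Summing over the $l$ reflection points gives $l/2$, the middle term. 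Assembling the three pieces yields the displayed formula for $\mu_h$, and since $H^0(C,\Omega^1_C)_{\rho_h}$ decomposes $\rho_h$-isotypically into $\mu_h$ copies of the two-dimensional $\rho_h$, we get $h^0(C)_{\rho_h} = 2\mu_h$.

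\emph{The main obstacle} I anticipate is purely bookkeeping rather than conceptual: making the translation between "eigenvalues expressed as $e_i$-th roots of unity" (as the Chevalley--Weil statement demands) and "eigenvalues expressed as $n$-th roots of unity" (as~\eqref{irr repres} naturally gives) completely rigorous, in particular verifying that $\sum_{\alpha=0}^{e_i-1} N_{i,\alpha}\langle -\alpha/e_i\rangle$ for the rotation case collapses to $\langle -hk_i/n\rangle + \langle hk_i/n\rangle$ with no stray terms or off-by-one errors, and confirming via Lemma~\ref{conj ram} (and the cited Proposition~\ref{even}) that the partition of the branch locus into the $r-l$ "rotation" points and $l$ "reflection" points is exhaustive and well-defined, including the subtlety in the even-$n$ case. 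Once that dictionary is pinned down, the rest is immediate substitution.
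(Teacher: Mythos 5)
Your proposal is correct and follows exactly the route the paper intends: the corollary is stated as a direct combination of Theorem~\ref{Chevalley-Weil} with the explicit matrices~\ref{irr repres}, with $d_{\rho_h}=2$, $\epsilon=0$, eigenvalues $\pm 1$ at the $l$ reflection points giving $l/2$, and eigenvalues $\xi_n^{\pm hk_i}$ at the rotation points giving $\langle -hk_i/n\rangle+\langle hk_i/n\rangle$ after rewriting $e_i$-th roots as $n$-th roots. Your bookkeeping (including absorbing the $\langle a^{n/2}\rangle$ case for even $n$ among the rotation points, and $h^0(C)_{\rho_h}=2\mu_h$ from $\dim\rho_h=2$) checks out.
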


For a $D_n$-cover $f:C\to\P^1$, we get from Corollary ~\ref{Chevalley-Weil proj} the following about the multiplicty of degree 2 irreducible representation of a $D_n$.
\begin{remark} \label{CW pr}
It follows from Theorem ~\ref{Chevalley-Weil proj} that if $\{t_{1},\ldots, t_u\}$ is the subset of $\{t_{1},\ldots, t_{r-l}\}$ consisting of branch points for which $n\nmid h k_i$ ($1\leq i\leq u$). Then 
\[\mu_{h}=2(g_Y-1)+\frac{l}{2}+u.\]
\end{remark}

\subsubsection{Dihedral covers of $\P^1$}
In the case $Y=\P^1$, one can say more about the above results. In particular, using Lemma ~\ref{conj ram}, one obtains the following result about the ramification behavoir of dihedral covers of curves. We may assume that all of the points of ramification index 2 orrespond to the conjugacy class of the same reflection $a^kb$.
\begin{proposition} \label{even}
Let $f:C\to \mathbb{P}^1$ be a $D_n$-cover with $\br(f)=\{t_{1},\dots,t_r\}$ where the ramification index of $t_j$ is $e_j$. Suppose, without loss of generality, that
$\{t_{r-l+1},\dots,t_r\}$ is the set of branch points with ramification index 2 that correspond to the conjugacy class of the reflection $a^kb$ as mentioned above. Let the rest of the branch points correspond to conjugcy classes $a^{d_i}$ with $d_i|n$ by Remark \ref{subgroups of dihedral}. Then the cover $f$ is not ramified over the infinity if and only if
\begin{enumerate}
\item $l$ is an even number $\geq 2$. 
\item $n\mid \sum d_i$.
\end{enumerate}
\end{proposition}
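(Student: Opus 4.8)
The plan is to convert the geometric statement into a product-one relation in $D_n$ via the Riemann existence theorem (Theorem~\ref{Riemann existence}) and then to compute inside $D_n=\langle a\rangle\rtimes\langle b\rangle$. First I would fix the presentation~\ref{braid} with small loops $\gamma_1,\dots,\gamma_r$ about $t_1,\dots,t_r$ and $\gamma_\infty$ about $\infty$, so that $\gamma_1\cdots\gamma_r\gamma_\infty=1$; writing $g_i:=\Phi(\gamma_i)$ for the monodromy, Theorem~\ref{Riemann existence} identifies $f$ with the tuple $(g_1,\dots,g_r)$, and $f$ is unramified over $\infty$ exactly when $\Phi(\gamma_\infty)=1$, i.e.\ when $g_1\cdots g_r=1$. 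By Lemma~\ref{conj ram} and Remark~\ref{conjugacy of dihedral}, for $i\le r-l$ the element $g_i$ lies in the conjugacy class of the rotation $a^{d_i}$, so $g_i\in\{a^{d_i},a^{-d_i}\}$, while for $i>r-l$ it is a reflection in the class of $a^kb$. Using the normalization made just before the statement, and the braid action on the family $\sC\to\sT_r$ to reorder the branch points, I would arrange $g_i=a^{d_i}$ for $i\le r-l$ and $g_{r-l+1}=\dots=g_r=a^kb$.

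For the direction ``unramified $\Rightarrow$ (1),(2)'', suppose $g_1\cdots g_r=1$. Reducing modulo the normal subgroup $\langle a\rangle$, i.e.\ via $D_n\to D_n/\langle a\rangle\cong\Z/2\Z$, each rotation maps to $0$ and each reflection to the nonzero element, so the relation forces $l$ even; and $l=0$ is impossible since then $\Phi$ could not be onto $D_n$, so $l\ge 2$, which is (1). For (2), in the normal form $g_1\cdots g_r=a^{d_1}\cdots a^{d_{r-l}}(a^kb)^l=a^{\sum d_i}$, because $a^kb$ is an involution and $l$ is even; hence $g_1\cdots g_r=1$ is equivalent to $n\mid\sum d_i$. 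For the converse I would take the tuple $g_i=a^{d_i}$ ($i\le r-l$), $g_{r-l+1}=\dots=g_r=a^kb$, which by the same computation has $g_1\cdots g_r=1$ and has each $g_i$ in the prescribed class; it remains to check that these elements generate $D_n$, which is precisely the condition that such branch data supports a $D_n$-cover at all, and when it does one can keep the product trivial while realizing it. Theorem~\ref{Riemann existence} then produces a $D_n$-cover of $\P^1$ with branch locus exactly $\{t_1,\dots,t_r\}$, hence unramified over $\infty$.

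The hard part will be the reduction to the normal form: a priori the $l$ reflections are arbitrary representatives $a^{k_i}b$ interleaved among the rotations, and the braid moves that bring them together conjugate the intervening rotations, swapping $a^{d_i}\leftrightarrow a^{-d_i}$, so one must check the divisibility conclusion is unaffected. The cleanest device I know is the exponent function $\nu\colon D_n\to\Z/n\Z$, $\nu(a^c)=\nu(a^cb)=c$, which satisfies the twisted additivity $\nu(xy)=\nu(x)+\operatorname{sgn}(x)\,\nu(y)$ for the sign character $\operatorname{sgn}\colon D_n\to\{\pm1\}$; applying it to $g_1\cdots g_r=1$ gives $\sum_i\varepsilon_i\,\nu(g_i)\equiv 0\pmod n$ with $\varepsilon_i=\prod_{j<i}\operatorname{sgn}(g_j)$, and the contribution of any even block of consecutive equal reflections is $k(1-1+1-\cdots)=0$, leaving exactly $\sum d_i\equiv 0\pmod n$. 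Beyond reconciling the various admissible normalizations of the monodromy tuple with the sign convention built into the $d_i$, everything else should be a routine application of Riemann existence together with the subgroup and conjugacy structure of $D_n$ recorded in Remarks~\ref{subgroups of dihedral} and~\ref{conjugacy of dihedral}.
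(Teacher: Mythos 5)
Your proposal is correct and follows essentially the same route as the paper: unramifiedness over $\infty$ is translated via the Riemann existence theorem into triviality of the product of the local monodromies, the parity of $l$ is read off from the quotient $D_n\to D_n/\langle a\rangle\cong \Z/2\Z$ together with surjectivity of $\Phi$ (which forces $l\geq 2$), and the divisibility condition comes from the exponent of $a$ in the resulting product. The sign/normalization issue you flag at the end is precisely the step the paper passes over in silence (it simply writes $\Phi(\gamma_2)\cdots\Phi(\gamma_r)=a^{\sum d_i}$ once the reflections are disposed of), so your extra care with the twisted exponent function is a refinement of, not a departure from, the paper's argument.
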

\begin{proof}
Since the inertia groups are cyclic subgroups of $G$, by description of the subgroups of $D_n$ in Remark \ref{subgroups of dihedral}, we see that the local monodromy for every point $t_i\in\{t_1,\dots,t_l\}$ is given by (the conjugacy class of) a reflection $a^{k}b$ and the rest of branch points correspond to elements of the form $a^{d_i}$ with $d_i|n$. Theorem \ref{Riemann existence} shows that there must exist reflections $a^{k}b$ among
the conjugacy classes arising from the branch points, for otherwise every generator belongs to the subgroup $\langle a\rangle$ and these can not generate $D_n$ so $l>0$. Now consider the epimorphism
\[\Phi:\pi_1(\P^1\setminus \br(f))\to G\]
Suppose $\gamma_{\infty},\gamma_2,\dots, \gamma_r$ are closed paths, where each $\gamma_i$ is a closed path around a $t_i$ and $\gamma_{\infty}$ is a closed path around the infinity and the infinity is not a branch point if and only if $\Phi(\gamma_{\infty})=1$. By Theorem \ref{Riemann existence}, this is equivalent to $\Phi(\gamma_{\infty})=\Phi(\gamma_{2})^{-1}\cdots \Phi(\gamma_{r})^{-1}=1$. But since $\langle a\rangle$ and $\langle b\rangle$ are disjoint subgroups, such a product can only
be trivial if there are an even number of elements of the form $a^{k}b$. If this is the case, we obtain that this is equal to $\Phi(\gamma_{2})\cdots \Phi(\gamma_{r})=a^{\sum d_i}$. Since $\ord(a)=n$, this is trivial if and only if $n\mid \sum d_i$.
\end{proof}

Let $B$ be a divisor with $\deg(B)=b$, an even number on $\mathbb{P}^1$ and $\mathcal{L}$ a line bundle on $\mathbb{P}^1$ such that $\mathcal{L}^2=\mathcal{O}_{\mathbb{P}^1}(B)=\mathcal{O}_{\mathbb{P}^1}(b)$.
Set $Z=\spec(\mathcal{O}_{\mathbb{P}^1}\oplus \mathcal{L})$. This is a smooth curve which  comes equipped with a covering map $q:Z\to\mathbb{P}^1$ branched along the divisor $B$ and
$q_*\mathcal{O}_{Z}=\mathcal{O}_{\mathbb{P}^1}\oplus \mathcal{L}$. By \cite{BHPV}, Lemma 17.1, $\Omega_Z=q^*(\Omega_{\mathbb{P}^1}\otimes \mathcal{L})=q^*\mathcal{O}_{\mathbb{P}^1}(\frac{b}{2}-2)$. \par Now let $D$ be a divisor on $Z$ together with a line bundle $\mathcal{F}$ such that $\mathcal{F}^2=\mathcal{O}_{Z}(D)$. Let $\mathcal{F}_i=\mathcal{F}^i(\langle \frac{iD}{n}\rangle)$ and $C=\spec(\oplus \mathcal{F}_i)$. There is a covering map $p:C\to Z$.
\begin{equation}
\begin{aligned}
\pi_*\Omega_C=q_*(p_*\Omega_C)=q_*(\Omega_Z\otimes p_*\mathcal{O}_C) =q_*(q^*\mathcal{O}_{\mathbb{P}^1}(\frac{b}{2}-2)\otimes (\oplus \mathcal{F}_i))= \\
q_*((\oplus \mathcal{F}_i)\otimes \mathcal{O}_{\mathbb{P}^1}(\frac{b}{2}-2)=q_*(\oplus \mathcal{F}_i)\otimes \mathcal{O}_{\mathbb{P}^1}(\frac{b}{2}-2)=\\
(\oplus U_i)\otimes
\mathcal{O}_{\mathbb{P}^1}(\frac{b}{2}-2)=\oplus (\Omega_Y\otimes
\mathcal{L}\otimes U_i).
\end{aligned}
\end{equation}
In the above we have used the projection formula (fourth equality). \par Note that the space $H^0(C,\mathcal{M}^{(1)}_{C})$ of meromorphic differential forms
on $C$ is a 1-dimensional vector space over the function field $\mathbb{C}(C)$ with basis $dx$. The equation \ref{dihedral equation} gives $dx=(\frac{2G^{\prime}x^n+nF^{\prime}F^{n-1}}{2nx^{n-1}(x^{n}-G)})dz$ and hence it follows that $H^0(C,\mathcal{M}^{(1)}_{C})=\{A(x)dz\mid A(x)\in\mathbb{C}(z)(x)\}$. Consequently,  it follows from \ref{FF eigenspace} that $H^0(C,\mathcal{M}^{(1)}_{C})_{\rho_h}=\langle x^hdz,x^{n-h}dz\rangle$ as a vector space over $\C(z)$. 


\begin{thebibliography}{00}
\bibitem{BHPV}
W. Barth, K. Hulek, C. Peters, A. Van de Ven, \emph{Compact Complex Surfaces.}(2004) Ergebnisse der Mathematik und ihrer Grenzgebiete. 3. Folge., 4, Springer-Verlag, Berlin. 
\bibitem{CLP}
F. Catanese, M. Lönne,  F. Perroni, \emph{Irreducibility of the space of dihedral covers of the projective line of a given numerical type.} Rend. Lincei Mat. Appl.
22 (2011), 291–309.
\bibitem{CLP2}
F. Catanese, M. Lönne,  F. Perroni, \emph{The irreducible components of the moduli space of dihedral covers of algebraic curves.}  Groups Geom. Dyn. 9 (2015), no. 4, 1185–1229.
\bibitem{CP}
F. Catanese, F. Perroni, \emph{Dihedral Galois covers of algebraic varieties and the simple cases.} J. Geom. Phys. 118 (2017), 67–93.
\bibitem{CW}
C. Chevalley, A. Weil, \emph{Über das Verhalten der Intergrale 1. Gattung bei Automorphismen des Funktionenkorpers.} Abhand. Math. Sem. Hamburg 10 (1934), 358–361.
\bibitem{En}
F. Enriques, \emph{Le Superficie Algebriche.} Nicola Zanichelli, Bologna (1949).
\bibitem{F}
O. Forster, \emph{Riemannsche Flächen.} Springer, 1977; English translation: \emph{Lectures on Riemann surfaces.} Graduate Texts in Mathematics. Springer, 1991
\bibitem{Loo}
L. Looijenga, \emph{Uniformization by Lauricella functions-an overview of the theory of Deligne-Mostow.} Lectures Notes for the CIMPA Summer School Arithmetic and Geometry Around Hypergeometric Functions 207-244, Progr. Math. 260, Birkh\"auser Verlag Basel (2007).
\bibitem{Mir}
R. Miranda, \emph{Algebraic Curves and Riemann Surfaces.} Graduate Studies in Mathematics series No. 5, AMS (1995). 
\bibitem{M}
A. Mohajer, \emph{On the structure of metabelian Galois coverings of complex algebraic varieties.} Colloquium Mathematicum 171 (2023), 231-249.
\bibitem{P21}
F. Perroni, \emph{Smooth covers of moduli stacks of Riemann surfaces with symmetry.} Boll. Un. Mat. Ital 15, pages 333–342 (2022)
\bibitem{R}
J. Rohde, \emph{Cyclic coverings, Calabi-Yau manifolds and complex multiplication}. Lecture Notes in Math. 1975, Springer, 2009.
\bibitem{Sz}
T. Szamuely,  \emph{Galois Groups and Fundamental Groups}. Cambridge Studies in Advanced Mathematics 117, Cambridge University Press, 2009.
\bibitem{Vo}
H. V\"olklein, \emph{Groups as Galois Groups: An introduction.} Cambridge studies in Advanced Mathematics 53, Cambridge University Press (1996) Cambridge.


	
	
\end{thebibliography}
\end{document}